\definecolor{specialteal}{RGB}{0, 128, 128}
\definecolor{specialorange}{RGB}{255, 153, 85}
\newcommand{\Z}{\mathbb Z}
\newcommand{\R}{\mathbb R}
\newcommand{\Q}{\mathbb Q}
\newtheorem{theorem}{Theorem}[section]
\newtheorem{lemma}[theorem]{Lemma}
\newtheorem{obstruction}[theorem]{Obstruction}
\newtheorem{proposition}[theorem]{Proposition}
\newtheorem{corollary}[theorem]{Corollary}
\newtheorem{conjecture}[theorem]{Conjecture}
\theoremstyle{remark}
\newtheorem{remark}[theorem]{Remark}
\theoremstyle{definition}
\newtheorem{definition}[theorem]{Definition}
\newcommand{\bZ}{\mathbb{Z}}      
\begin{document}

\title{Cubiquitous Lattices and Branched Covers bounding rational balls}

\author[Choi]{Erica Choi}
\author[Saglam]{Nur Saglam}
\author[Simone]{Jonathan Simone}
\author[Stuopis]{Katerina Stuopis}
\author[Zhou]{Hugo Zhou}

\maketitle 

\begin{abstract} 
In \cite{greeneowens}, Greene and Owens explore \textit{cubiquitous} lattices as an obstruction to rational homology 3-spheres bounding rational homology 4-balls. 
The purpose of this article is to better understand which sublattices of $\Z^n$ are cubiquitous with the aim of effectively using their cubiquity obstruction.
We develop a geometric obstruction (called the Wu obstruction) to cubiquity and use it as tool to completely classify which sublattices with orthogonal bases are cubiquitous. 
We then apply this result to the double branched covers of alternating connected sums of torus links. Finally, we explore how the Wu obstruction can be used in conjunction with \textit{contractions} to obstruct the cubiquity of infinite families of lattices.
\end{abstract}

\section{Introduction}
A rational homology 3-sphere is a closed 3-manifold $Y$ satisfying $H_*(Y;\Q)\cong H_*(S^3;\Q)$; a rational homology 4-ball is a 4-manifold $X$ with boundary satisfying $H_*(X;\Q)\cong H_*(B^4;\Q)$. It is a straightforward exercise to show that the boundary of a rational homology 4-ball is a rational homology 3-sphere. The converse question---does a given rational homology 3-sphere bound a rational homology 4-ball---is a well-known problem in low-dimensional topology (cf. Problem 4.5 in \cite{kirbyproblemlist}).
Although there are many (partial) classifications of infinite families of rational homology 3-spheres bounding rational homology 4-balls (e.g. \cite{liscalensspaces}, \cite{lecuonamontesinosknots}, \cite{acetogolla}, \cite{akbulutlarson}, \cite{fickle}, \cite{cassonharer}, \cite{simonecircles}), the question at large is wide open.

A rather successful method of obstructing rational homology 3-spheres from bounding rational homology 4-balls comes in the form of lattice embeddings, following from Donaldson's Diagonalization Theorem \cite{donaldson}. A \textit{lattice} is a pair $(A,Q)$, where $A\cong \Z^n$ and $Q$ is a nondegenerate symmetric bilinear form on $A$. Given two lattices $(A,Q)$ and $(B,R)$, a \textit{lattice embedding} $i:(A,Q)\to (B,R)$ is an injective linear map $i:A\to B$ satisfying $R(i(x),i(y))=Q(x,y)$ for all $x,y\in A$.

\begin{obstruction} 
Let $Y$ be a rational homology 3-sphere that bounds a 4-manifold $X$ with positive definite intersection form $Q_X$. Suppose $Y$ also bounds a rational homology 4-ball. Then there exists a \textit{lattice embedding} $$i:(H_2(X), Q_X)\to (\Z^{n}, I),$$ where $n=\text{rank}(H_2(X))$ and $I$ is the standard dot product.
\label{ob:donaldson}
\end{obstruction}

In \cite{greenejabuka} (reinterpreted in \cite{greeneowens}), it is shown that Heegaard Floer homology $d$-invariants can often be used to extract additional geometric information from the lattice embedding, which provides a more robust obstruction.
This additional geometric information comes in the form of \textit{cubiquity}. 
We say that a lattice is \emph{cubiquitous} if it admits a lattice embedding into some $\bZ^n$ such that its image $\Lambda$ hits every unit cube in $\Z^n$; namely,
\[
\Lambda \cap (x+ \{0,1\}^n) \neq 0
\]
for every $x \in \Z^n.$ Moreover, we say that the sublattice $\Lambda$ of $\Z^n$ is cubiquitous.

\begin{obstruction} 
Let $Y$, $X$, and $i$ be as in Obstuction \ref{ob:donaldson}. If $X$ is  \textit{sharp}\footnote{We will not provide a precise definition of \textit{sharp} in this article, as all manifolds under consideration will automatically be sharp; see, e.g. \cite{greeneowens} for the precise definition of sharp.}, then $i$ is cubiquitous.
\label{ob:owens}
\end{obstruction}

Cubiquity provides a stronger obstruction (Obstruction \ref{ob:owens}) than the existence of a lattice embedding (Obstruction \ref{ob:donaldson}). 
For this reason, this article aims to better understand the question ``which sublattices of $\Z^n$ are cubiquitous?" We assume throughout that every sublattice $\Lambda$ is \textit{full-rank}; that is, $\text{rank}(\Lambda)=\text{rank}(\Z^n)=n$. It is clear that sublattices that are not full-rank cannot be cubiquitous. In \cite{greeneowens}, Greene-Owens completely classify which full-rank sublattices of $\Z^n$ with ``large bases" are cubiquitous.

\begin{theorem}[\cite{greeneowens}] 
Let $\Lambda\subset\Z^n$ be a full-rank sublattice and let $B$ be a matrix whose columns form a basis for $\Lambda$.
\begin{itemize}
    \item if $\det B>2^n$, then $\Lambda$ is not cubiquitous.
    \item if $\det B=2^n$, then $\Lambda$ is cubiquitous if and only if $\Lambda$ has a Haj{\'o}s basis\footnote{A Haj{\'o}s basis is one formed from the columns of a lower triangular matrix with $2$'s on the diagonal and $0$s and $1$s below the diagonal.}.
\end{itemize}
\label{thm:greeneowens1}
\end{theorem}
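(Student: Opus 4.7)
The plan is to prove the two bullets separately: the first via an asymptotic density count, and the second by reducing cubiquity (at index $2^n$) to a classical factorization question about the finite abelian group $\Z^n/\Lambda$, which is answered by Haj\'os's theorem on factorizations of finite abelian groups.

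For the first bullet, fix a window $W_N=[0,N)^n$. As $x$ ranges over $W_N\cap\Z^n$ there are $N^n$ cubes $x+\{0,1\}^n$ which cubiquity requires $\Lambda$ to hit. Conversely, each point $y\in\Z^n$ lies in exactly $2^n$ such cubes (take $x=y-\epsilon$ for $\epsilon\in\{0,1\}^n$), so the total number of (lattice point, cube) incidences is at most $2^n\cdot|\Lambda\cap W_N'|$ for a slightly enlarged window $W_N'=[-1,N)^n$. Since $|\Lambda\cap W_N'|=N^n/\det B+O(N^{n-1})$, cubiquity forces
\[
N^n\le 2^n\cdot N^n/\det B+O(N^{n-1}),
\]
and letting $N\to\infty$ yields $\det B\le 2^n$.

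For the second bullet, when $\det B=2^n$ this count is asymptotically tight, so the number of multiply-hit cubes in $W_N$ is $O(N^{n-1})$. But by $\Lambda$-translation invariance of the configuration, a single multiply-hit cube would produce a positive density of multiply-hit cubes, so in fact every $x\in\Z^n$ is hit by exactly one $\lambda\in\Lambda\cap(x+\{0,1\}^n)$. Equivalently, $\{0,1\}^n$ is a complete set of coset representatives for $\Z^n/\Lambda$, giving
\[
\Z^n=\Lambda\oplus\{0,1\}^n
\]
as a set-theoretic direct sum. Passing to $G=\Z^n/\Lambda$, a finite abelian group of order $2^n$, this exhibits $G$ as a Minkowski-sum factorization $G=\{0,\bar e_1\}\oplus\cdots\oplus\{0,\bar e_n\}$. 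Haj\'os's theorem (the original resolution of Minkowski's cube-tiling conjecture) then asserts that in any such factorization into two-element subsets, at least one factor must be a subgroup, i.e., $2\bar e_i=0$ in $G$, meaning $2e_i\in\Lambda$. Peeling this coordinate off and iterating on a rank-$(n{-}1)$ quotient then produces a basis of $\Lambda$ of Haj\'os type. The converse direction is constructive: given a Haj\'os basis one writes down the tiling $\Z^n=\Lambda\oplus\{0,1\}^n$ directly, and this tiling is exactly the content of cubiquity.

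The main obstacle is the Haj\'os direction --- extracting a Haj\'os basis from the Minkowski factorization of $G$. This step is not elementary; it is precisely the content of Haj\'os's theorem, so the entire combinatorial substance of the second bullet is packaged into invoking this classical result. The counting bound in bullet one and the ``Haj\'os basis implies cubiquitous'' converse are both soft. A minor technical subtlety, handled above, is upgrading the asymptotic count into the set-theoretic identity $\Z^n=\Lambda\oplus\{0,1\}^n$, which the $\Lambda$-translation-invariance argument dispatches cleanly.
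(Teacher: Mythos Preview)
The paper does not prove this theorem: it is stated as a result of Greene--Owens \cite{greeneowens} and cited as background, with no proof given in the present article. There is therefore nothing in this paper to compare your proposal against.

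That said, your sketch is essentially the correct argument and is, to my knowledge, the approach taken in \cite{greeneowens}: the density count gives the bound $\det B\le 2^n$, equality forces $\{0,1\}^n$ to be a transversal for $\Z^n/\Lambda$, and Haj\'os's theorem on factorizations of finite abelian groups then extracts the Haj\'os basis. The only place where you are slightly telegraphic is the ``peeling off'' induction once Haj\'os gives you $2e_i\in\Lambda$; one should note that $e_i\notin\Lambda$ (else the factorization of $G$ would collapse), so the quotient by the subgroup $\{0,\bar e_i\}$ really does drop the index to $2^{n-1}$ and the remaining factors descend. But this is a routine detail, not a gap.
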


In the present article, we explore necessary conditions for a sublattice to be cubiquitous, without restriction on the size of the basis (measured via determinant). The main obstructive tool we will develop applies to a large class of sublattices having so-called \textit{non-acute} bases; we defer the precise definition of \textit{non-acute} to Section \ref{sec:subsets}. This obstruction relies on the \textit{Wu element} associated to the basis of a sublattice; given a sublattice with basis $S=\{v_1,\ldots,v_n\}$, the Wu element of $S$ is $W=\sum_{i=1}^nv_i.$ In the following, $\{e_1,\ldots,e_n\}$ denotes the standard orthogonal basis for $\Z^n$

\begin{restatable}[Wu Obstruction]{proposition}{Wuobstruction}
Let $\Lambda\subset\bZ^n$ be a sublattice with basis $S=\{v_1,\ldots,v_n\}$ which is non-acute. Let $k_1,\ldots,k_n$ be the unique integers satisfying $W=\sum_{i=1}^nk_ie_i$ and set $R_o:=\{i\,:\,k_i\equiv 1\pmod 2\}$. If $$\sum_{i=1}^nk_i^2>4n-3|R_o|,$$ then $\Lambda$ is not cubiquitous. 
\label{prop:Wuobstruction}
\end{restatable}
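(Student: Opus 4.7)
The plan is to produce an explicit $x_\star \in \bZ^n$ whose unit cube $x_\star + \{0,1\}^n$ contains no element of $\Lambda$, which directly witnesses non-cubiquity. I would take the witness $x_\star = \lfloor W/2 \rfloor$ coordinate-wise; equivalently, $2x_\star = W - \chi_{R_o}$, where $\chi_{R_o}$ denotes the indicator vector of $R_o$. Suppose for contradiction that some $\lambda = x_\star + \epsilon \in \Lambda$ with $\epsilon \in \{0,1\}^n$. Form the auxiliary lattice vector $\mu := 2\lambda - W = 2\epsilon - \chi_{R_o} \in \Lambda$; its coordinates are $\pm 1$ on $R_o$ and lie in $\{0, 2\}$ on the complement $R_e := \{1,\dots,n\} \setminus R_o$. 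This yields the upper bound
$$\|\mu\|^2 \;\le\; |R_o| + 4|R_e| \;=\; 4n - 3|R_o|.$$
Moreover, $\mu \neq 0$ since $W/2 \notin \Lambda$ (expanding $W/2$ in the basis $\{v_i\}$ would require each coefficient to equal $1/2$), and $\mu$ lies in the coset $W + 2\Lambda$ because $\mu + W = 2\lambda \in 2\Lambda$.

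The crux is to use the non-acute hypothesis to establish the matching lower bound $\|\mu\|^2 \ge \sum_i k_i$ for every nonzero $\mu \in W + 2\Lambda$. Writing $\mu = \sum_i d_i v_i$ with each $d_i$ odd (forced by the coset condition), an exchange argument based on $v_i\cdot v_j \le 0$ for $i\neq j$ should show that $\|\mu\|^2$ is minimized at the sign-uniform configuration $d = \pm\mathbf 1$: mixed signs force the cross terms $d_i d_j\, v_i \cdot v_j$ to be non-negative, strictly increasing the norm, while magnitudes $|d_i| \ge 3$ can be reduced without increasing $\|\mu\|^2$ by invoking the obtuse-superbase structure that adjoining $v_0 := -W$ typically imparts. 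At $d = \pm\mathbf 1$ we have $\mu = \pm W$, so $\|\mu\|^2 = \|W\|^2 = \sum k_i^2 \ge \sum k_i$, the last inequality holding since $k(k-1) \ge 0$ for every integer $k$. Combining bounds yields $\sum k_i \le 4n - 3|R_o|$, contradicting the hypothesis.

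The main obstacle is the magnitude-reduction step of the exchange argument. While sign alignment follows immediately from non-acuteness, shrinking $|d_i|$ to $1$ can fail for non-acute bases with some $v_i \cdot W < 0$, and one expects to need either the stronger obtuse-superbase condition or a delicate use of the hypothesis $\sum k_i > 4n - 3|R_o|$ itself to exclude the pathological cases. If a clean global reduction is unavailable, a fallback is to refine the witness $x_\star$ coordinate-wise based on the sign pattern of the column sums $k_i$ (choosing $\lceil k_i/2\rceil$ versus $\lfloor k_i/2\rfloor$ on a per-coordinate basis), which preserves the upper bound on $\|\mu\|^2$ while potentially tightening the lower bound sufficiently to close the argument.
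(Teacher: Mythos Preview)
Your overall architecture coincides with the paper's: pick the cube(s) centred at $W/2$ and show $\Lambda$ misses them. In fact your reformulation via $\mu=2\lambda-W$ is just a change of variables from what the paper does, since
\[
\|\mu\|^2-\|W\|^2 \;=\; \|2\lambda-W\|^2-\|W\|^2 \;=\; -4\,\langle \lambda,\,W-\lambda\rangle .
\]
Thus your ``crux'' inequality $\|\mu\|^2\ge\|W\|^2$ for $\mu\in W+2\Lambda$ is exactly the statement $\langle\lambda,W-\lambda\rangle\le 0$ for $\lambda\in\Lambda$, which is precisely what the paper establishes. Your upper bound $\|\mu\|^2\le 4n-3|R_o|$ matches the paper's computation of $\langle x,W-x\rangle>0$ for $x$ in the chosen cube (they also use $\sum k_i^2$ rather than $\sum k_i$ in that step; your observation $k_i^2\ge k_i$ bridges to the statement as literally written).

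Where your proposal goes astray is the \emph{method} for the lower bound. No exchange argument, no obtuse-superbase hypothesis, and no refined choice of witness is needed: the non-acute conditions alone give the inequality by a one-line algebraic identity. Writing $\lambda=\sum_i x_i v_i$ with $x_i\in\Z$, one has
\[
\langle \lambda,\,W-\lambda\rangle
= \sum_i x_i(1-x_i)\Bigl(a_i+\sum_{j\ne i}\langle v_i,v_j\rangle\Bigr)
\;-\;\sum_{i<j}(x_i-x_j)^2\,\langle v_i,v_j\rangle,
\]
and each summand is $\le 0$ because $x_i(1-x_i)\le 0$ for integers, $a_i+\sum_{j\ne i}\langle v_i,v_j\rangle\ge 0$, and $\langle v_i,v_j\rangle\le 0$. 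Equivalently, in your odd-coefficient variables $d_i=2x_i-1$,
\[
\|\mu\|^2-\|W\|^2
= \sum_i (d_i^2-1)\Bigl(a_i+\sum_{j\ne i}\langle v_i,v_j\rangle\Bigr)
+ \sum_{i<j}(d_i-d_j)^2\bigl(-\langle v_i,v_j\rangle\bigr)\;\ge\;0,
\]
which is exactly the minimisation at $d=\pm\mathbf 1$ you were seeking, obtained without any iterative reduction. So your plan is correct in outline but the proposed mechanism for the hard step is the wrong one; replace it with this identity and the proof is complete and essentially identical to the paper's.
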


We use this obstruction to explore the cubiquity of full-rank sublattices with orthogonal bases, which we call \textit{orthogonal sublattices}.

\begin{theorem} An orthogonal sublattice of $\Z^n$ is cubiquitous if and only if it admits a basis $\mathcal{B}=\{b_1,\ldots,b_n\}$ such that the matrix $B=\begin{bmatrix} b_1&\cdots&b_n\end{bmatrix}$ is a block matrix with blocks of the form $\begin{bmatrix}1\end{bmatrix}$, $\begin{bmatrix}2\end{bmatrix}$, and 
$\begin{bmatrix}1&-1\\1&1\end{bmatrix}$ up to changing the basis of $\Z^n$.
\label{thm:main}
\end{theorem}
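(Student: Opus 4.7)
The plan is to handle the two implications separately, with the substantive work concentrated in the ``only if'' direction via the Wu obstruction (Proposition~\ref{prop:Wuobstruction}).

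For sufficiency, I would first verify that cubiquity is preserved under orthogonal direct sums embedded in orthogonal direct sums of ambient copies of $\Z^{n_i}$: if $\Lambda_i\subset\Z^{n_i}$ is cubiquitous for $i=1,2$, then $\Lambda_1\oplus\Lambda_2\subset\Z^{n_1+n_2}$ is cubiquitous, because any $x\in\Z^{n_1+n_2}$ splits as $(x_1,x_2)$ and a neighboring lattice point is produced in each factor. It then suffices to verify cubiquity for each of the three admissible blocks: the $[1]$ block gives $\Z\subset\Z$ (trivial), the $[2]$ block gives $2\Z\subset\Z$ (every interval $\{m,m+1\}$ meets $2\Z$), and the $2\times 2$ block gives $\{(x,y)\in\Z^2 : x+y\text{ even}\}$ (every unit square in $\Z^2$ contains two vertices with $x+y$ even).

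For necessity, let $\Lambda$ be a cubiquitous orthogonal sublattice with orthogonal basis $v_1,\ldots,v_n$, set $n_i=|v_i|^2$, and write $v_i=\sum_j a_{ji}e_j$. Orthogonality makes all pairwise inner products vanish, so the basis is non-acute and the Wu obstruction applies to $W=\sum_i v_i=\sum_j k_j e_j$, with $\sum_j k_j^2=\sum_i n_i$ by orthogonality. Since replacing each $v_i$ by $\pm v_i$ and acting on $\Z^n$ by a signed permutation preserves cubiquity, I may choose signs $\delta_i,\epsilon_j\in\{\pm 1\}$ that maximize $\sum_j \left|\sum_i \delta_i a_{ji}\right|$ before invoking the obstruction.

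I would then rule out every non-standard configuration of the basis in three stages, each yielding $\sum_j k_j>4n-3|R_o|$ and contradicting cubiquity. Stage~1 shows $n_i\in\{1,2,4\}$: a vector with $n_i=3$ or $n_i\geq 5$ introduces enough imbalance into $\sum_j k_j^2=\sum_i n_i$ that no sign pattern can conceal the excess. Stage~2 shows each $n_i=4$ vector is $\pm 2e_j$ for some $j$, ruling out $v_i=\pm e_a\pm e_b\pm e_c\pm e_d$ with four distinct indices. Stage~3 shows each norm-$2$ vector $v_i=\pm e_a\pm e_b$ admits a basis partner $v_j$ with the same support (necessarily of the form $\pm e_a\mp e_b$ by orthogonality), and that distinct such pairs have disjoint supports. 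The required block decomposition then follows by reordering the basis vectors and applying a final signed permutation of $\Z^n$: norm-$1$ vectors yield $[1]$ blocks, the $\pm 2e_j$ vectors yield $[2]$ blocks, and each norm-$2$ pair yields a $\begin{bmatrix}1&-1\\1&1\end{bmatrix}$ block.

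The main obstacle is that the Wu obstruction is a global condition depending on the entire basis through $W$; ruling out a single ``bad'' $v_i$ requires choosing the $\delta_i$ and $\epsilon_j$ jointly so that its large contribution to $\sum_j|k_j|$ is not cancelled by the other basis vectors. I expect the most delicate case to be Stage~1 with a single $n_i\geq 5$ in the presence of many norm-$2$ companions, where an averaging or probabilistic argument over $\delta_i\in\{\pm1\}$ may be needed to guarantee a favorable sign pattern. Stages~2 and~3 reduce to similar but milder sign-balancing problems.
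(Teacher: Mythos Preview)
Your sufficiency argument is fine and matches the paper's (Lemma~\ref{lem:block} plus a direct check on each block). The necessity direction, however, has a genuine gap: the Wu obstruction together with sign choices is not strong enough to carry out Stage~2.

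Take the $4\times 4$ Hadamard basis $v_1=(1,1,1,1)$, $v_2=(1,-1,1,-1)$, $v_3=(1,1,-1,-1)$, $v_4=(1,-1,-1,1)$ in $\Z^4$. Each $v_i$ has norm $4$ but is not of the form $\pm 2e_j$, so this is exactly a configuration Stage~2 must exclude. For any $\delta_i\in\{\pm 1\}$, every coordinate $k_j=\sum_i\delta_i a_{ji}$ is a sum of four $\pm 1$'s and hence even, so $|R_o|=0$ regardless of your sign choices; the $\epsilon_j$'s do nothing to parities. Because the basis is orthogonal, $\sum_j k_j^2=\sum_i n_i=16$ is invariant under all sign flips, and the Wu inequality reads $16>4\cdot 4-3\cdot 0=16$, which fails. (The lattice really is non-cubiquitous---the cube at $(1,0,0,-1)$ misses it---but the Wu obstruction sits exactly on the boundary.) More generally, once the basis is orthogonal the quantity $\sum_j k_j^2$ is fixed, so the only leverage your $\delta_i$ give is over $|R_o|$; whenever a block has every column supported on an even number of $\pm 1$ entries, $|R_o|$ is forced to zero and your strategy cannot separate it from the allowed blocks. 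Your ``averaging over $\delta_i$'' idea cannot help here, and the same boundary phenomenon will recur for Stage~3 whenever an unpaired norm-$2$ vector sits inside a larger even-parity block.

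The paper's route is structurally different and does not attempt a norm-by-norm classification. It first peels off vectors with one-element support (these become the $[a]$ blocks), then uses a linear-independence count (Lemma~\ref{lem:LI}, Lemma~\ref{lem:p1=0}) to force $p_1(S)=0$, and in Proposition~\ref{prop:p2} analyzes the coordinates $i$ with $|E_i|=2$ via further linear-independence arguments and ``double projections'' before invoking the Wu obstruction through the counting identity of Lemma~\ref{lem:equation}. The point is that the Wu inequality is only applied \emph{after} these combinatorial reductions have produced a coordinate with a large entry, which is what pushes the inequality strictly over; your sign-flip scheme has no mechanism to manufacture such an entry.
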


\subsection{Application to Double Branched Covers}

In \cite{greeneowens}, cubiquitous lattice embeddings are studied in relation to a large family of rational homology 3-spheres---those
arising as the double branched covers over non-split alternating links in $S^3$ (note that the double branched cover along a split link is not a rational homology 3-sphere); we use $\Sigma(L)$ to denote the double cover of $S^3$ branched along $L$. 
Given such a link $L$, there is a well-defined positive definite lattice $\Lambda(L)$ induced by $L$, which is unique up to isometry  (see the discussion above Theorem 1 in \cite{greeneowens} for details). 
The following result can be viewed as a special case of Obstruction \ref{ob:owens}.

\begin{theorem}[\cite{greeneowens}] Let $L$ be a non-split alternating link. If $\Sigma(L)$ bounds a rational homology ball, then $\Lambda(L)$ is cubiquitous.
\label{thm:greeneowens}
\end{theorem}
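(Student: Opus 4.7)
The plan is to recognize this as a direct special case of Obstruction~\ref{ob:owens}, so the task reduces to producing a positive definite, sharp 4-manifold $X_L$ bounded by $\Sigma(L)$ whose intersection lattice realizes $\Lambda(L)$. Once such an $X_L$ is in hand, Obstruction~\ref{ob:owens} immediately converts the rational ball hypothesis into cubiquity of $\Lambda(L)$.

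First I would construct $X_L$. Fix a reduced alternating diagram $D$ of $L$ and a checkerboard coloring of its complement in $S^2$. Let $F\subset S^3$ be the checkerboard surface corresponding to (say) the white regions. Pushing $F$ into $B^4$ and taking the double cover of $B^4$ branched along the pushed-in surface yields a smooth 4-manifold $X_L$ with $\partial X_L = \Sigma(L)$; equivalently, $X_L$ is the plumbed 4-manifold associated to the white Tait graph of $D$, weighted by signed twist counts at the crossings. A classical Gordon--Litherland style computation identifies the intersection form on $H_2(X_L)$ with the Goeritz form of $D$, and the alternating hypothesis, together with choosing the coloring of the correct sign, ensures this form is positive definite. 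By the definition of $\Lambda(L)$ recalled above the theorem, the resulting lattice is exactly $\Lambda(L)$.

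Second, and this is the main obstacle, I would establish the sharpness of $X_L$; this is the only step requiring input beyond classical 3- and 4-dimensional topology. Here I would invoke the work of Ozsv\'ath--Szab\'o on Heegaard Floer $d$-invariants of double branched covers of alternating links: they prove that $\Sigma(L)$ is an L-space and that its $d$-invariants are the maxima of $(c_1(\mathfrak{t})^2+n)/4$ over characteristic covectors $\mathfrak{t}\in\Lambda(L)^\ast$ representing each given $\mathrm{Spin}^c$ structure on $\Sigma(L)$. Since characteristic covectors for $\Lambda(L)$ are precisely the first Chern classes of $\mathrm{Spin}^c$ extensions over $X_L$, this identification is exactly the statement that $X_L$ is sharp.

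With these two ingredients in hand, the proof concludes in one line. Supposing $\Sigma(L)$ bounds a rational homology 4-ball, Obstruction~\ref{ob:owens} applied to the sharp, positive definite filling $X_L$ produces a cubiquitous lattice embedding $i\colon(H_2(X_L),Q_{X_L})\hookrightarrow(\Z^n,I)$, which exhibits $\Lambda(L)$ as a cubiquitous sublattice of $\Z^n$, as desired.
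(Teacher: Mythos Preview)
The paper does not supply its own proof of this theorem: it is quoted from \cite{greeneowens}, with only the remark that it ``can be viewed as a special case of Obstruction~\ref{ob:owens}.'' Your proposal is correct and is precisely the unpacking of that remark---you build the branched double cover $X_L$ of $B^4$ along a checkerboard surface, identify its intersection lattice with $\Lambda(L)$ via the Goeritz form, invoke Ozsv\'ath--Szab\'o to get sharpness, and then apply Obstruction~\ref{ob:owens}. This is exactly the argument of Greene--Owens that the paper is citing, so your approach matches.
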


It is further conjectured by Greene-Owens in \cite{greeneowens}  that in fact, cubiquity is a sufficient condition for $\Sigma(L)$ bounding a rational homology 4-ball.

\begin{conjecture}[\cite{greeneowens}]\label{conj:main}
For a non-split alternating link $L,$ $\Sigma(L)$ bounds a rational ball $\Longleftrightarrow$ $\Lambda(L)$ is cubiquitous.
 \end{conjecture}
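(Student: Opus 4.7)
The forward direction ($\Sigma(L)$ bounds a rational ball $\Rightarrow$ $\Lambda(L)$ is cubiquitous) is precisely Theorem \ref{thm:greeneowens} recalled just above, so all of the content lies in the converse. Since this converse is itself the open statement of Conjecture \ref{conj:main}, what follows is necessarily a research strategy rather than a complete argument.

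The plan is to promote a cubiquitous lattice embedding $i\colon\Lambda(L)\hookrightarrow\Z^n$ to an explicit ribbon-type construction of a rational homology 4-ball bounded by $\Sigma(L)$. First I would fix the standard checkerboard-colored projection of the non-split alternating diagram of $L$ and form the associated negative-definite plumbed 4-manifold $X(L)$ with $\partial X(L)=\Sigma(L)$ and $(H_2(X(L)),-Q_{X(L)})\cong \Lambda(L)$; this $X(L)$ is known to be sharp, so that Obstruction \ref{ob:owens} applies and the given embedding $i$ really is cubiquitous in the geometric sense. The goal is then to manufacture a rational ball $W$ with $\partial W=\Sigma(L)$ by cutting and re-gluing $X(L)$ along a codimension-zero cobordism read off from $i$.

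Second, I would extract topological content from cubiquity through Heegaard Floer correction terms. The calculation of \cite{greenejabuka} shows that for alternating $L$ the $d$-invariants of $\Sigma(L)$ are computed by a minimization problem over characteristic coclasses in the image $i(\Lambda(L))\subset\Z^n$, and cubiquity is exactly the condition that forces every $\mathrm{spin}^c$ minimizer to be realized. Matching this against the Ozsv\'ath--Szab\'o obstruction $d(\Sigma(L),\mathfrak s)=0$ for a rational ball filling, cubiquity yields precisely the numerical necessary conditions. The substantive step is to show these necessary conditions are also sufficient in the alternating case: concretely, I would attempt to build a ribbon surface in $B^4$ for an appropriate associated link (for instance the link obtained by a sequence of rational tangle replacements on $L$) whose complement doubles to the required $W$.

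Third, I would reduce the problem to the cases already accessible from this paper, using Theorem \ref{thm:main} together with the contraction technology developed in the later sections. For orthogonal $\Lambda(L)$, Theorem \ref{thm:main} identifies the cubiquitous ones as block sums of $[1]$, $[2]$, and $\left[\begin{smallmatrix}1&-1\\1&1\end{smallmatrix}\right]$; each block corresponds to a lens space summand $L(1,0)$, $L(4,1)$, or $L(4,1)$ whose bounding rational balls are classical (cf.\ \cite{liscalensspaces}, \cite{cassonharer}, \cite{fickle}). The hope is that the block decomposition of $\Lambda(L)$ can be promoted to a \emph{topological} connected-sum (or banding) decomposition of $L$, so the ball for $\Sigma(L)$ is assembled blockwise. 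For general $\Lambda(L)$, contractions would play the role of inductive reductions, with the Wu obstruction serving as the consistency check that guarantees the induction cannot stall on a non-cubiquitous piece.

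The hard part is unquestionably the third step: at present there is no known mechanism to convert a purely combinatorial cubiquitous embedding into a sequence of topological band attachments that realize a rational ball filling. My tactic for attacking it is to test the proposed algorithm first on the alternating connected sums of torus links treated in the body of this paper, where Theorem \ref{thm:main} gives a complete combinatorial classification on one side, and where the Montesinos-style rational tangle structure on the other side makes candidate ribbon moves visible. A successful match in this family would give strong evidence for the conjecture and, more importantly, a template for the general construction.
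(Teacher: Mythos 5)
This statement is labeled a conjecture in the paper (attributed to Greene--Owens), and the paper offers no proof of it; so there is no ``paper's own proof'' to compare against, and your proposal---as you yourself acknowledge---is a research program, not an argument that establishes the statement. You correctly identify that the forward direction is exactly Theorem \ref{thm:greeneowens} and that all the content is in the converse. But the converse is precisely what remains open, and your plan does not close it: the step you flag as hard (converting a cubiquitous embedding into band attachments or a ribbon-type construction producing a rational ball) is the entire open problem, and no mechanism for it is known. What the paper actually does is much more modest: it resolves the conjecture only for alternating connected sums of torus links of the same sign (Theorem \ref{thm:maintop}), by showing via Proposition \ref{prop:diffeo} that $\Lambda(L)$ is the diagonal lattice $\operatorname{diag}\{-k_1,\ldots,-k_n\}$, applying the classification of cubiquitous orthogonal sublattices (Theorem \ref{thm:main}) to pin down $k_i\in\{-1,-2,-4\}$ with an even number of $-2$'s, and then invoking the known explicit construction (a disk and a M\"obius band in $B^4$, following Lisca) to produce the rational ball in those cases. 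In other words, the sufficiency direction is handled by quoting existing constructions for a very specific family, not by any general cubiquity-to-filling machine.

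Two concrete inaccuracies in your sketch are worth flagging. First, the $2\times 2$ block $\left[\begin{smallmatrix}1&-1\\1&1\end{smallmatrix}\right]$ does not correspond to a single lens space $L(4,1)$; as a sublattice of $\Z^2$ it is a cubiquitous embedding of $\operatorname{diag}(2,2)$, i.e.\ it accounts for two $T(2,-2)$ summands, with double branched cover $L(2,1)\# L(2,1)$---this is exactly why Corollary \ref{cor:torussum} requires an \emph{even} number of parameters equal to $-2$, since a single $[2]$ block by itself is not cubiquitous. Second, your hope that a block decomposition of $\Lambda(L)$ forces a topological connected-sum decomposition of $L$ is unsupported and is, in essence, a restatement of the difficulty: the paper sidesteps this entirely by working with a family where the topological decomposition is given in advance. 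So your proposal is a reasonable statement of how one might attack the conjecture, but it should not be presented as a proof, and its third step would need genuinely new ideas that neither the paper nor the literature currently supplies.
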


In \cite{greeneowens}, Greene-Owens resolve Conjecture \ref{conj:main} for non-split alternating links with $|H_1(\Sigma(L))|=4^n$; this follows from the second part of Theorem \ref{thm:greeneowens1} (note that if $\Sigma(L)$ bounds a rational homology 4-ball, then by \cite{cassongordon} the image of the lattice embedding given by Obstruction \ref{ob:donaldson} has a basis whose determinant is precisely $\sqrt{|H_1(\Sigma(L))|}$ ). 
Theorem \ref{thm:main} provides us with a similar result for alternating connected sums of torus links.

\begin{theorem}\label{thm:maintop}
    Conjecture \ref{conj:main} holds for alternating connected sums of torus links of the same sign.
 \end{theorem}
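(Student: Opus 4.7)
The plan is to exploit the additivity of both $\Lambda$ and $\Sigma$ under the alternating connected sum and then apply Theorem~\ref{thm:main} to the resulting orthogonal direct sum of rank-one lattices. Since the forward implication of Conjecture~\ref{conj:main} is already supplied by Theorem~\ref{thm:greeneowens}, I focus on showing that if $\Lambda(L)$ is cubiquitous then $\Sigma(L)$ bounds a rational ball.

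First I would unwind the connected sum structure: for $L = T(2,n_1)\#\cdots\# T(2,n_k)$ with all factors of the same sign, one has $\Sigma(L) = L(n_1,1) \# \cdots \# L(n_k, 1)$ and $\Lambda(L) = \bigoplus_i \Lambda(T(2,n_i))$. The key identification is a Goeritz-form computation on the two-vertex Tait graph with $n_i$ multi-edges, which in the same-sign convention is the one realizing Greene's positive-definite lattice; this identifies $\Lambda(T(2,n_i))$ with the rank-one lattice $(\Z, [n_i])$, so that $\Lambda(L) \cong (\Z^k, \operatorname{diag}(n_1, \ldots, n_k))$ is an orthogonal sublattice to which Theorem~\ref{thm:main} applies.

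Applying Theorem~\ref{thm:main}, I would compute that the Gram matrices $B^TB$ of the three allowed blocks $[1]$, $[2]$, and $\begin{bmatrix} 1 & -1 \\ 1 & 1 \end{bmatrix}$ are $[1]$, $[4]$, and $2I_2$ respectively. Hence $\Lambda(L)$ is cubiquitous exactly when the multiset $\{n_1,\ldots, n_k\}$ partitions into singletons equal to $1$, singletons equal to $4$, and pairs of $2$'s; equivalently, each $n_i \in \{1, 2, 4\}$ and the number of indices with $n_i = 2$ is even. To close the argument I would construct rational ball fillings of $\Sigma(L)$ matching this criterion: $L(1,1) = S^3$ bounds $B^4$; $L(4,1)$ bounds a rational ball by Lisca~\cite{liscalensspaces}; and $L(2,1) \# L(2,1)$ bounds a rational ball because $L(2,1) = \R P^3$ has order two in the rational homology cobordism group. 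Boundary connected sum of rational balls is a rational ball, so $\Sigma(L)$ bounds, completing the equivalence.

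The main technical obstacle will be the Goeritz identification in the first step: confirming that Greene's positive-definite convention, under the same-sign hypothesis, selects the rank-one form $(\Z, [n_i])$ rather than the alternative rank-$(n_i-1)$ form $A_{n_i-1}$ arising from the other Tait graph. This identification is precisely what makes Theorem~\ref{thm:main} apply cleanly to the orthogonal direct sum; without it one would have to contend with non-orthogonal $A_{n_i-1}$ summands and fall back on Proposition~\ref{prop:Wuobstruction} together with its contraction refinements, which is considerably more delicate. Once this identification is in hand, both the combinatorial extraction via Theorem~\ref{thm:main} and the topological matching via Lisca's theorem plus cobordism-order-two cancellation are essentially routine.
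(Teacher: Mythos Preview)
Your proposal is correct and follows essentially the same route as the paper: identify $\Lambda(L)$ with the diagonal lattice $\operatorname{diag}(|n_1|,\ldots,|n_k|)$ via the Goeritz/Tait computation, apply Theorem~\ref{thm:main} to extract the condition $|n_i|\in\{1,2,4\}$ with an even number of $2$'s, and then cite Lisca for the rational-ball fillings. The paper resolves the Goeritz obstacle you flag by reading off the rank-one blocks directly from the Tait graph of the unbounded checkerboard surface (proof of Corollary~\ref{cor:torussum}), and it additionally records via Proposition~\ref{prop:diffeo} that the resulting lattice is independent of how the connected sum is performed---a point your additivity assertion implicitly uses.
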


 \begin{remark}
     Given a family of torus links, there are different ways to perform the connected sum, yielding non-isotopic links as a result. However,  it turns out that the isomorphism type of the lattice induced  by the connected sum does not depend on the way one performs the connected sum. See the discussion in Section \ref{sec:toruslinks}.
 \end{remark}

 \begin{remark}
     Theorem \ref{thm:maintop} is already implicitly known in the literature. It follows from \cite{lisca-sumsoflensspaces}, \cite{greene}, and calculations of Heegaard Floer homology $d$-invariants of connected sums of lens spaces. However, Theorem \ref{thm:main} provides more information. The results in the literature show that $\Sigma(L)$---where $L$ is an alternating connected sum of torus links of the same sign---bounds a rational homology 4-ball if and only if the lattice embedding $\Lambda(L)\to \Z^n$ coming from Obstruction \ref{ob:donaldson} is cubiquitous. However, $\Lambda(L)$ can admit many lattice embeddings into $\Z^n$ and it is not clear which of these embeddings comes from Obstruction \ref{ob:donaldson}; hence Obstruction \ref{ob:owens} cannot tell us precisely which lattice embedding is cubiquitous. Theorem \ref{thm:main}, however, tells us exactly which lattice embeddings are cubiquitous.
 \end{remark}

\subsection{Contractions}
The notion of \textit{contractions} (defined in Section \ref{sec:contractions}) has been paramount in understanding the existence of certain kinds of sublattices (see, e.g., \cite{liscalensspaces}, \cite{lisca-sumsoflensspaces}, \cite{simone}), which has led to a full understanding of which lens spaces bound rational homology 4-balls, for example. In particular, contractions often allow one to create infinite families of sublattices.

\begin{restatable}{lemma}{contractions}
Contractions of non-acute sublattices preserve the Wu obstruction given in Proposition \ref{prop:Wuobstruction}.
\label{lem:preserve}
\end{restatable}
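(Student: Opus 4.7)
The plan is to compare the Wu data of $\Lambda$ and its contraction $\Lambda^c$ coordinate by coordinate. After an orthogonal change of basis of $\bZ^n$ (which preserves both cubiquity and the Wu inequality), I will assume without loss of generality that the basis vector along which we contract has the form $v_1 = e_1 - e_2$, and I will model the contraction by the quotient map $\pi\colon \bZ^n \twoheadrightarrow \bZ^{n-1}$ that identifies $e_1$ and $e_2$ with a single basis vector $\tilde e$ and fixes each remaining $e_i$. The contracted basis of $\Lambda^c$ is then $\{\pi(v_2),\ldots,\pi(v_n)\}$, and I will take from the setup of Section~\ref{sec:contractions} that the contraction of a non-acute basis is again non-acute.

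Next I will compute the Wu element of the contracted basis. Since $\pi(v_1)=0$, one has
\[
W^c \;=\; \sum_{i=2}^n \pi(v_i) \;=\; \pi(W) \;=\; (k_1+k_2)\,\tilde e \;+\; \sum_{i\ge 3} k_i\, e_i,
\]
so the coordinate sum is unchanged: $\sum_j k_j^c = \sum_i k_i$. The only coordinate whose parity can change is the merged one, since $\tilde e \in R_o^c$ iff $k_1+k_2$ is odd. A short case analysis then yields $|R_o^c|=|R_o|$ when $k_1$ and $k_2$ have different parities or are both even, and $|R_o^c|=|R_o|-2$ when both are odd.

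When $|R_o^c|=|R_o|$ the new Wu bound satisfies
\[
4(n-1)-3|R_o^c| \;=\; 4n-4-3|R_o| \;<\; 4n-3|R_o|,
\]
so the hypothesis $\sum k_i > 4n-3|R_o|$ immediately upgrades to $\sum_j k_j^c > 4(n-1)-3|R_o^c|$, and Proposition~\ref{prop:Wuobstruction} applied to $\Lambda^c$ finishes these cases. The hard part will be the remaining case where $k_1$ and $k_2$ are both odd: there the new bound is $4n+2-3|R_o|$, exceeding the original by $2$, so the inequality is not preserved for free. I plan to close this $2$-gap using the specific structure of contraction from Section~\ref{sec:contractions}: either the contracting vector $v_1$ can be chosen so that this parity configuration does not arise (leveraging that $\langle W, v_1\rangle = 2 + \sum_{i\ge 2}\langle v_1,v_i\rangle$ constrains the parity of $k_1-k_2$), or the non-acute structure forces a compensating gain of at least $2$ in $\sum k_i$ whenever $k_1$ and $k_2$ are simultaneously odd. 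Establishing this parity compensation is the crux of the argument; once it is in hand, $\sum_j k_j^c > 4(n-1)-3|R_o^c|$ follows and Proposition~\ref{prop:Wuobstruction} applied to $\Lambda^c$ completes the proof.
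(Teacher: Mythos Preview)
Your model of ``contraction'' does not match the paper's Definition~\ref{def:contraction}. You contract along a basis vector $v_1=e_1-e_2$ by quotienting $\Z^n\to\Z^{n-1}$ that merges the coordinates $e_1,e_2$ and drops $v_1$. The paper's contraction instead fixes a \emph{coordinate} $e_i$ with $E_i=\{s,t,u\}$, $\langle v_s,e_i\rangle=-\langle v_t,e_i\rangle=\pm1$, $|\langle v_u,e_i\rangle|=1$, and replaces $v_s,v_t,v_u$ by $v_s+v_t$ and $v_u-\langle v_u,e_i\rangle e_i$, then deletes the $e_i$-coordinate. No basis vector of the form $e_a-e_b$ is assumed, and no two ambient coordinates are merged.

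With the correct definition the computation is immediate and your ``hard case'' never arises. One has
\[
W'=\sum_{j\ne s,t,u}v_j+(v_s+v_t)+(v_u-\langle v_u,e_i\rangle e_i)=W-\langle v_u,e_i\rangle e_i,
\]
and since $\langle v_s,e_i\rangle+\langle v_t,e_i\rangle=0$ we get $k_i=\langle v_u,e_i\rangle=\pm1$. Thus $W'$ is literally $W$ with the $i$-th coordinate removed, that coordinate is odd, so $|R_o^{S'}|=|R_o^S|-1$, $n$ drops by $1$, and $\sum_\alpha k_\alpha^2$ drops by exactly $k_i^2=1$. The Wu inequality $\sum k_\alpha^2>4n-3|R_o|$ is therefore preserved as an \emph{equivalence}, not just an implication. (Note also that the Wu obstruction involves $\sum k_\alpha^2$, as in the proof of Proposition~\ref{prop:Wuobstruction} and Corollary~\ref{cor:wuobstruction}; your bookkeeping tracks $\sum k_\alpha$, which would not survive the merge you describe.) The parity-compensation argument you sketch is not needed and, in any case, cannot be completed under your model because that model is not the operation being studied.
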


This straight-forward calculation allows one to obstruct the cubiquity of infinite families of sublattices, which in turn, can aid one in obstructing the existence of rational homology 4-balls bounded by infinite families of rational homology 3-spheres. In Section \ref{sec:contractions}, we will see how this strategy applies to sublattices with so-called \textit{good} bases (defined by Lisca in \cite{liscalensspaces}), which has implications on lens spaces bounding rational homology 4-balls.

\subsection*{Organization}  In Section \ref{sec:subsets}, we will consider a particular family of sublattices having bases called \textit{non-acute subsets} and prove Proposition \ref{prop:Wuobstruction}.
The proof of Theorem \ref{thm:main} will be carried out in Sections \ref{sec:p4} and \ref{sec:proof}. In Section \ref{sec:toruslinks}, we will outline the connection between orthogonal sublattices and alternating connected sums of torus links, and prove Theorem \ref{thm:maintop}. Finally, in Section \ref{sec:contractions}, we will prove Lemma \ref{lem:preserve} and apply it to so-called \textit{good subsets}.

\subsection*{Acknowledgements}
This work originated in an REU at Georgia Tech that was supported by the NSF grants 1745583, 1851843, 2244427 and the Georgia Tech
College of Sciences. The authors would also like to thank the anonymous referees for their thorough review.

\section{Non-acute subsets}\label{sec:subsets}

For vectors $v,w\in \mathbb{Z}^n$, we use $\langle v,w\rangle$ to denote the standard positive-definite dot product of $v$ and $w$. We denote the standard orthogonal basis of $\Z^n$ by $\{e_1,\ldots,e_n\}$. Following \cite{liscalensspaces}, we now define some useful notation.

\begin{definition} Let $S=\{v_1,\ldots,v_n\}\subset\Z^n$ be a subset with $\langle v_i, v_i\rangle=a_i$. We define the following notation. $$I(S):=\displaystyle\sum_{i=1}^n(a_i -3)$$
	$$E^S_j:=\{i\text{ }:\text{ } \langle v_i, e_j\rangle\neq0\}$$
	$$ V^S_{i}:=\{j\text{ }:\text{ } \langle v_i, e_j\rangle \neq 0\}$$
    $$\mathcal{P}_i^S:=\{j\text{ }:\text{ } |E_j^S|=i\}$$
	$$p_i(S):=|\mathcal{P}_i^S|$$
 
\noindent Note that the sizes of the above sets can be interpreted using the matrix $\begin{bmatrix}v_1&v_2&\cdots& v_n\end{bmatrix}$; indeed, $|E_j^S|$ is the number of columns whose $j$-th entry is non-zero, $|V^S_{i}|$ is the number of rows whose $i$-th entry is non-zero, and $|\mathcal{P}_i^S|$ is the number of rows with exactly $i$ non-zero entries. In some cases we will drop the superscript $S$ from the above notation if the subset being considered is understood.\end{definition}

The following formula is a strengthening of Lemma 2.5 in \cite{liscalensspaces}.

\begin{lemma} Given any subset $S=\{v_1,\ldots,v_n\}\subset\Z^n$,
    $$2p_1(S) + p_2(S) + I(S) = \sum_{j=4}^n(j-3)p_j(S) + \sum_{\substack{s, i\\ \langle v_s, e_i \rangle \neq 0}}(\langle v_s, e_i \rangle ^2 - 1)$$
\label{lem:equation}
\end{lemma}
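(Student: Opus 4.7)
The plan is to prove the identity by direct algebraic manipulation, reducing it to a simple counting identity about the matrix $M = [v_1\ v_2\ \cdots\ v_n]$. Both sides of the claimed equation secretly record the same quantity, namely the ``excess'' of $\sum_i a_i$ over what $n$ identity-like rows would contribute; I will make this explicit.

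First, I would unpack $I(S)$ using $a_i = \langle v_i, v_i\rangle = \sum_{j=1}^n \langle v_i, e_j\rangle^2$, obtaining
\[
I(S) \;=\; \sum_{i=1}^n a_i \;-\; 3n \;=\; \sum_{(s,i):\,\langle v_s,e_i\rangle\neq 0}\langle v_s, e_i\rangle^2 \;-\; 3n.
\]
Since every nonzero entry satisfies $\langle v_s, e_i\rangle^2 \geq 1$, I would then split each nonzero square as $1 + (\langle v_s,e_i\rangle^2 - 1)$ to separate out exactly the double sum that appears on the right-hand side of the target identity. Writing $N$ for the total number of nonzero entries in $M$, this gives
\[
I(S) \;=\; N \;-\; 3n \;+\; \sum_{\substack{s,i\\ \langle v_s,e_i\rangle\neq 0}}\bigl(\langle v_s,e_i\rangle^2 - 1\bigr).
\]

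Next, I would count $N$ by rows. The $j$-th row of $M$ contains exactly $|E_j^S|$ nonzero entries, and each row index $j$ belongs to a unique $\mathcal{P}_k^S$. Hence $N = \sum_{k\geq 1} k\,p_k(S)$. Also $n = \sum_{k\geq 0} p_k(S)$, and since $S$ is a full-rank subset (no row of $M$ is zero) we have $p_0(S) = 0$, so $n = \sum_{k\geq 1} p_k(S)$.

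Substituting these into the identity, the double sum $\sum(\langle v_s,e_i\rangle^2-1)$ cancels from both sides, and the desired identity collapses to the purely combinatorial statement
\[
2p_1 + p_2 + N - 3n \;=\; \sum_{k\geq 4}(k-3)\,p_k.
\]
Expanding $N = p_1 + 2p_2 + 3p_3 + \sum_{k\geq 4} k\,p_k$ and $3n = 3p_1 + 3p_2 + 3p_3 + 3\sum_{k\geq 4}p_k$, the coefficients of $p_1$, $p_2$, and $p_3$ on the left each become $0$, while the coefficient of $p_k$ for $k\geq 4$ becomes $k-3$, matching the right-hand side exactly. There is no real obstacle here beyond careful bookkeeping; the only conceptual point to flag is the dependence on $p_0(S)=0$, which holds for every subset spanning a full-rank sublattice and is the regime in which the lemma will be applied.
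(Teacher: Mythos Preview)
Your argument is correct and is essentially the same computation as the paper's, just run in the opposite direction: the paper starts from the right-hand side and uses the same three identities ($\sum_s a_s = \sum_{s,i}\langle v_s,e_i\rangle^2$, $\sum_{j\ge 1} j\,p_j(S)=N$, and $\sum_{j\ge 1} p_j(S)=n$) to arrive at the left-hand side. You are also right to flag the hypothesis $p_0(S)=0$; the paper uses it silently in the step $\sum_{j=1}^n p_j(S)=n$, and the stated identity is indeed false otherwise.
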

\begin{proof}
\begin{align*}
    \sum_{j=4}^n&(j-3)p_j(S) + \sum_{\substack{s, i\\ \langle v_s, e_i \rangle \neq 0}}(\langle v_s, e_i \rangle ^2 - 1)\\
    &= \sum_{j=4}^njp_j(S) - 3\sum_{j = 4}^np_j(S) + \sum_{\substack{s,i\\ \langle v_s, e_i \rangle \neq 0}}(\langle v_s, e_i \rangle ^2 - 1) \\
    &= \sum_{j=4}^njp_j(S) - 3\sum_{j = 1}^np_j(S) + 3p_1(S) + 3p_2(S) + 3p_3(S)+ \sum_{\substack{s, i\\ \langle v_s, e_i \rangle \neq 0}}(\langle v_s, e_i \rangle ^2 - 1)\\   
    &= \sum_{j=1}^njp_j(S) - 3n + 2p_1(S) + p_2(S) + \sum_{\substack{s, i\\ \langle v_s, e_i \rangle \neq 0}}(\langle v_s, e_i \rangle ^2 - 1)\\
    &= \sum_{j=1}^njp_j(S) - 3n + 2p_1(S) + p_2(S) + \sum_{\substack{s, i\\ \langle v_s, e_i \rangle \neq 0}}\langle v_s, e_i \rangle ^2 - \sum_{j=1}^njp_j(S)\\
    &= - 3n + 2p_1(S) + p_2(S) + \sum_{i=1}^na_i\\
    &= 2p_1(S) + p_2(S) + \sum_{i=1}^n(a_i-3)\\
    &= 2p_1(S) + p_2(S) + I(S)
\end{align*}
\end{proof}

\begin{definition}
    A set $S = \{v_1, ..., v_n\}$ is \textit{non-acute} if $a_i=\langle v_i,v_i\rangle\geq 1$ for all $i$, $\langle v_i,v_j\rangle\le0$ for all $i\neq j$, and
    $$a_i\ge -\sum_{j\neq i}\langle v_j,v_i\rangle\text{ for all }i.$$
    Furthermore, $S$ is called \textit{orthogonal} if we impose the condition that $\langle v_i,v_j\rangle=0$ for all $i\neq j$.    
\end{definition}

\begin{remark}
    A lattice admits a non-acute basis if and only if it admits an \textit{obtuse superbase}, which is a well-known concept in the literature. Hence, the results in this paper shed light on the cubiquity of lattices with obtuse superbases.
\end{remark}

\begin{definition}
    Given a subset $S=\{v_1,\ldots,v_n\}\subset\Z^n$ with Wu element $W:=\displaystyle\sum_{i=1}^nv_i=\displaystyle\sum_{i=1}^n k_ie_i$, we define the following sets
    \begin{equation*}
        \begin{split}
            \mathcal{O}^S&=\{i\,|\,k_i=0\}\\
            R_o^S&=\{i\,|\,k_i\equiv 1\pmod 2\}\\
            R_e^S&=\{i\,|\,k_i\equiv 0\pmod 2, k_i\neq 0\}
        \end{split}
    \end{equation*}
    We will usually drop the superscript notation when the subset being considered is understood.
\end{definition}

The following cubiquity obstruction comes from examining $S$ and is a generalization of Theorem 6.4 in \cite{brejevssimone}.

\Wuobstruction*

\begin{proof}
Let $z=\frac{1}{2}W$. For a unit cube $C\subset\Z^n$, let $\overline{C}\subset\R^n$ denote the solid unit cube containing $C$. Let $K$ be the union of all unit cubes $C$ such that $z\in\overline{C}$. 
We will show that $\Lambda\cap K=\emptyset$.
Notice that $x\in K$ if and only if $W-x\in K$; similarly, $x\in \Lambda$ if and only if $W-x\in \Lambda$. Hence $x\in \Lambda\cap K$ if and only if $W-x\in \Lambda\cap K$.

Let $x\in \Lambda$. Then $x$ is of the form $x=\sum_{i=1}^nx_iv_i$ and $W-x=\sum_{i=1}^n(1-x_i)v_i$. Hence the dot product of these vectors is
\begin{equation*}
    \begin{split}
        \langle x,W-x\rangle &=\sum_{i=1}^nx_i(1-x_i)a_i+\sum_{i<j}(x_i-2x_ix_j+x_j)\langle v_i,v_j\rangle\\
        &=\Big(\sum_{i=1}^nx_i(1-x_i)(a_i+\sum_{j\neq i}\langle v_i, v_j\rangle)\Big)+\sum_{i<j}(x_i-x_j)^2\langle v_i,v_j\rangle\\
        &\le 0.
    \end{split}
\end{equation*}
The last inequality follows from the fact that for non-acute subsets, $\langle v_i,v_j\rangle\le0$ for all $i\neq j$ and $a_i\ge -\sum_{j\neq i}\langle v_j,v_i\rangle$ for all $i$.

Next let $x\in K$. Then $x$ is of the form $$x=\sum_{i\in R_o}\frac{k_i+\epsilon_i}{2}e_i+\sum_{i\in R_e}\frac{k_i+2\delta_i}{2}e_i+\sum_{i\in \mathcal{O}}\delta_ie_i,$$
where $\epsilon_i\in\{-1,1\}$ for all $i\in R_o$ and $\delta_i\in\{-1,0,1\}$ for all $i\in R_e\cup\mathcal{O}$. Hence $W-x$ is of the form 
    $$W-x=\sum_{i\in R_o}\frac{k_i-\epsilon_i}{2}e_i+\sum_{i\in R_e}\frac{k_i-2\delta_i}{2}e_i-\sum_{i\in \mathcal{O}}\delta_ie_i.$$
Since $\sum_{i=1}^n k_i^2>4n-3|R_0|$, we have that the dot product between these two vectors is
\begin{equation*}
    \begin{split}
        \langle x,W-x\rangle&=\sum_{i\in R_o}\frac{k_i^2-1}{4}+\sum_{i\in R_e}\frac{k_i^2-4}{4}-|\mathcal{O}|\\
        &=\sum_{i=1}^n \frac{k_i^2}{4}-\frac{|R_o|+4|R_e|+4|\mathcal{O}|}{4}\\
        &=\sum_{i=1}^n \frac{k_i^2}{4}-\frac{4n-3|R_o|}{4}\\
        &>0.
    \end{split}
\end{equation*}
The result follows.   
\end{proof}

For the sake of exposition, if the lattice generated by a subset $S=\{v_1,\ldots,v_n\}$ is cubiqutious, then we call $S$ \textit{cubiquitous}; otherwise we say that $S$ is not cubiquitous. 

\begin{corollary}
Let $S\subset\mathbb{Z}^n$ be an orthogonal subset with Wu element $W=\sum_{i=1}^nk_ie_i$. If $$I(S)>n-3|R_o|,$$ then $S$ is not cubiquitous.
    \label{cor:wuobstruction}
\end{corollary}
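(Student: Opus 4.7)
The plan is to deduce the corollary immediately from Proposition \ref{prop:Wuobstruction} after two short observations. First, I would check that every orthogonal subset is automatically non-acute: assuming $a_i \ge 1$ (which is needed for $S$ to be a basis), orthogonality gives $\langle v_i, v_j\rangle = 0 \le 0$ for $i \neq j$ and $a_i \ge 1 > 0 = -\sum_{j\neq i}\langle v_j, v_i\rangle$. Hence Proposition \ref{prop:Wuobstruction} applies, and it suffices to verify $\sum_i k_i^2 > 4n - 3|R_o|$ (this being what the proof of that proposition actually establishes).

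Second, I would rewrite $\sum k_i^2$ in terms of the $a_i$. Since $W = \sum_s v_s$ and simultaneously $W = \sum_i k_i e_i$ with $\{e_i\}$ orthonormal, we have
\[
\sum_{i=1}^n k_i^2 \;=\; \langle W,W\rangle \;=\; \sum_{s=1}^n a_s + 2\sum_{s<t}\langle v_s,v_t\rangle \;=\; \sum_{s=1}^n a_s,
\]
where the final equality uses the orthogonality of $S$. By the definition of $I(S)$, this gives the clean identity $\sum_i k_i^2 = I(S) + 3n$.

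Combining the two steps, the hypothesis $I(S) > n - 3|R_o|$ rearranges exactly to $\sum_i k_i^2 > 4n - 3|R_o|$, and Proposition \ref{prop:Wuobstruction} concludes that $\Lambda$ is not cubiquitous. There is no genuine obstacle here: the whole content of the corollary is the identity $\sum k_i^2 = \sum a_i$ afforded by orthogonality, after which the bound is just algebra.
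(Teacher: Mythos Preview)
Your proposal is correct and follows essentially the same route as the paper: compute $\langle W,W\rangle$ two ways to get $\sum_i k_i^2=\sum_i a_i=I(S)+3n$, rearrange the hypothesis to $\sum_i k_i^2>4n-3|R_o|$, and invoke Proposition~\ref{prop:Wuobstruction}. Your explicit check that an orthogonal subset is non-acute is a harmless addition (in the paper's setup orthogonal is defined as a special case of non-acute), and you are right that the relevant inequality from the proposition's proof involves $\sum_i k_i^2$ rather than $\sum_i k_i$.
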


\begin{proof}
    Since $S$ is orthogonal, we can compute $\langle W, W\rangle$ in two simple ways. First, since $W=\sum_{i=1}^nk_ie_i$, $\langle W, W\rangle =\sum_{i=1}^nk_i^2$. On the other hand, since $W=\sum_{i=1}^nv_i$, $\langle W, W\rangle = \sum_{i=1}^n a_i$. Thus the inequality 
    $\sum_{i=1}^nk_i^2>4n-3|R_o|$ can be rewritten as $\sum_{i=1}^na_i>4n-3|R_o|$, or $I(S)>n-3|R_o|$. Now by Proposition \ref{prop:Wuobstruction}, the result follows.
\end{proof}

We end this section with a result about direct sums of (cubiquitous) lattices. 

\begin{lemma} 
Let $\Lambda_1\subset \bZ^m$ and $\Lambda_2\subset\bZ^n$ be full-rank sublattices and let $\Lambda=\Lambda_1\oplus\Lambda_2\subset\bZ^k$, where $k=m+n$. Then $\Lambda$ is cubiquitous if and only if $\Lambda_1$ and $\Lambda_2$ are cubiquitous.
\label{lem:block}
\end{lemma}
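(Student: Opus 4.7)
The plan is to verify both directions directly from the definition, exploiting the fact that $\bZ^k = \bZ^m \oplus \bZ^n$ splits compatibly with the decomposition $\Lambda = \Lambda_1 \oplus \Lambda_2$, and that the unit cube $\{0,1\}^k$ likewise factors as $\{0,1\}^m \times \{0,1\}^n$. In particular, a lattice point $(w_1,w_2)$ lies in $(y_1,y_2) + \{0,1\}^k$ if and only if $w_i \in y_i + \{0,1\}^{m}$ and $w_2 \in y_2 + \{0,1\}^n$ separately, so the cubiquity condition itself factors under the direct sum.

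For the ``if'' direction, I would take an arbitrary $x = (y_1,y_2)\in \bZ^m\oplus\bZ^n = \bZ^k$ and apply cubiquity of $\Lambda_1$ and $\Lambda_2$ separately to obtain lattice points $\lambda_i \in \Lambda_i \cap (y_i + \{0,1\}^{\dim})$. Then $(\lambda_1,\lambda_2) \in \Lambda \cap (x + \{0,1\}^k)$ by the product-of-cubes observation above, witnessing cubiquity of $\Lambda$.

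For the ``only if'' direction, suppose $\Lambda$ is cubiquitous. Given any $y \in \bZ^m$, I would feed the vector $x = (y,0) \in \bZ^k$ into the cubiquity of $\Lambda$ to produce some $(\lambda_1, \lambda_2) \in \Lambda\cap (x + \{0,1\}^k)$. Because $\Lambda = \Lambda_1 \oplus \Lambda_2$, each coordinate block $\lambda_i$ automatically lies in $\Lambda_i$; moreover the constraint on the first $m$ coordinates gives $\lambda_1 \in y + \{0,1\}^m$, so $\Lambda_1$ is cubiquitous. (The $\lambda_2$ component is discarded; symmetrically, taking $x = (0,y)$ for $y\in\bZ^n$ shows $\Lambda_2$ is cubiquitous.)

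No step here looks substantive: both implications are essentially pointwise verifications using that a cube in $\bZ^k$ is the product of cubes in $\bZ^m$ and $\bZ^n$, together with the splitting of $\Lambda$. The only thing to be careful about is to phrase the definition of cubiquity in terms of the ambient $\bZ^m$, $\bZ^n$, and $\bZ^k$ embeddings (rather than as an abstract property of the lattice), so that the direct sum decomposition can be used without ambiguity; this is what the last sentence of the cubiquity definition in the introduction permits.
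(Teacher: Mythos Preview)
Your proposal is correct and follows essentially the same approach as the paper: both arguments exploit that a unit cube in $\bZ^k$ factors as a product of unit cubes in $\bZ^m$ and $\bZ^n$, together with the splitting $\Lambda=\Lambda_1\oplus\Lambda_2$. The only cosmetic difference is that the paper proves the ``only if'' direction by contrapositive (finding an empty cube in $\bZ^k$ from one in $\bZ^m$), whereas you argue it directly by projecting a witness in $\Lambda$ down to $\Lambda_1$.
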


\begin{proof}
Suppose that $\Lambda_1$ and $\Lambda_2$ are cubiquitous. 
Let $C$ be a unit cube in $\bZ^k$ and let $C_1$ and $C_2$ denote the projections of $C$ to $\bZ^m$ and $\bZ^n$, respectively.
Then there exists $x_1\in C_1\cap \Lambda_1$ and $x_2\in C_2\cap \Lambda_2$. Since $\Lambda=\Lambda_1\oplus\Lambda_2$, it follows that $x=(x_1,x_2)\in C\cap \Lambda$; hence $\Lambda$ is cubiquitous.

For the converse, assume that either $\Lambda_1$ or $\Lambda_2$ is not cubiquitous; without loss of generality assume the former. Then there is a unit cube $C_1\subset \bZ^m$ that does not intersect $\Lambda_1$. Let $C\subset\bZ^k$ be a unit cube that projects onto $C_1$ under the projection $\bZ^k\to \bZ^m$. Let $C_2$  denote the projection of $C$ to $\bZ^n$; then $C=C_1\oplus C_2$.
Pick a vector $x\in C$. Let $x_1\in C_1$ and $x_2\in C_2$ be the unique vectors such that $x=(x_1,x_2)\in C_1\oplus C_2=C$. Since $C_1\cap \Lambda_1=\emptyset$, we have that $x_1\not\in\Lambda_1$. It now follows that since $\Lambda=\Lambda_1\oplus\Lambda_2$, $x\not\in\Lambda$. Hence $C\cap \Lambda=\emptyset$ and so $\Lambda$ is not cubiquitous.
\end{proof}

\section{Orthogonal Subsets with $p_4(S)=n$}\label{sec:p4}
The proof of Theorem \ref{thm:main} will rely on the Wu obstruction and, in particular, Corollary \ref{cor:wuobstruction}. However, this obstruction does not apply for subsets $S$ satisfying $I(S)=n-3|R_o|$. We will come across such subsets in the proof of Theorem \ref{thm:main}; in particular, we will need an understanding of orthogonal subsets $S$ satisfying $p_4(S)=n$. 

\begin{theorem} Let $S=\{v_1,\ldots,v_n\}\subset\Z^n$ be an orthogonal subset with $p_4(S)=I(S)=n$ and let $B=\begin{bmatrix}v_1&\cdots &v_n\end{bmatrix}$. Then up to a change of basis of $\Z^n$ and reordering or negating columns of $B$, $B$ is a block diagonal matrix such that each block is either of the form

$$\begin{bmatrix}1&1&1&1&0&0&0&0\\1&-1&-1&-1&0&0&0&0\\0&1&-1&-1&1&0&0&0\\0&-1&1&1&1&0&0&0\\0&0&1&-1&0&1&1&0\\0&0&-1&1&0&1&-1&0\\0&0&1&-1&0&0&-1&1\\0&0&-1&1&0&0&1&1\end{bmatrix}\text{ or }\begin{bmatrix}1&1&1&1&0&0&0&0\\1&-1&-1&-1&0&0&0&0\\0&-1&1&0&1&1&0&0\\0&1&-1&0&1&-1&0&0\\0&1&0&-1&0&1&1&0\\0&-1&0&1&0&-1&1&0\\0&0&-1&1&0&1&0&1\\0&0&1&-1&0&-1&0&1\end{bmatrix},$$
or has the property that each column has precisely four nonzero entries, each of which are either $1$ or $-1$.
    \label{thm:p4=n}
\end{theorem}

\begin{remark}
    Note that although reordering or negating columns of $B$ changes the subset $S$, it does not change the sublattice $\Lambda$ generated by $S$. Indeed, these operations simply provide a different basis for $\Lambda$. Since our ultimate goal is to classify cubiquitious orthogonal sublattices, there is no harm in considering subsets up to these operations.
\end{remark}

\begin{remark}
    Determining the explicit blocks listed in the statement of Theorem \ref{thm:p4=n} requires case-by-case lattice analysis. One can apply the same strategy to  explicitly determine the blocks whose columns have precisely four nonzero entries (for example, one can find an infinite family of such blocks). However, for the purposes of this paper, we do not need to know these blocks explicitly; in determining cubiquity, it will be enough to know that the columns have exactly four nonzero entries.
\end{remark}

Notice that Corollary \ref{cor:wuobstruction} cannot be applied to the subsets listed in Theorem \ref{thm:p4=n} since for those subsets we have $|R_o|=0$ and $I(S)=n$. Hence to determine cubiquity, we cannot use Corollary \ref{cor:wuobstruction}. We will instead appeal to Theorem \ref{thm:greeneowens1} as well as elementary linear algebra to prove the following.

\begin{theorem}
    If $S=\{v_1,\ldots,v_n\}\subset\Z^n$ is an orthogonal subset with $p_4(S)=n$, then $S$ is not cubiquitous.
    \label{thm:p4notcb}
\end{theorem}
    
To prove these results, we first record some elementary calculations.

\subsection{Determinant Calculations}
Let $a,b,c,d\ge 1$ be integers and consider the matrix
$$M=\begin{bmatrix}a&-1&-1&-1\\-1&b&-1&-1\\-1&-1&c&-1\\-1&-1&-1&d\end{bmatrix}.$$
Then
    \begin{equation}
        \det M=-2a-ab-2b-ac-bc-2c-ad-bd+abcd-cd-2d-3.
        \label{eq:det}
    \end{equation}

\begin{lemma} If $\det M=0$, then $\min\{a,b,c,d\}\le 3$.
    \label{lem:det}
\end{lemma}

\begin{proof}
    By symmetry, we may assume that $1\le a\le b\le c\le d$. We must show that $a\le 3$.
    Assume that $a\ge 4$; then $b,c,d\ge4$. Setting equation (\ref{eq:det}) to $0$ and solving for $d$ yields 
    \begin{equation}
    d=\frac{ab+ac+bc+2(a+b+c)+3}{abc-(a+b+c+2)}
        \label{eq:d}
    \end{equation}
    Note that since $a,b,c\ge4$, we have that $abc>a+b+c+2$ and so equation (\ref{eq:d}) is defined.  
    Since the derivative of $d$ with respect to $a$ is $\displaystyle-\frac{(b+1)^2(c+1)^2}{(abc-(a+b+c+2))^2}$, $d$ is decreasing as a function of $a$. Similar calculations show that $d$ is also decreasing as a function of $b$ and as a function of $c$. Subject to the contraint $a,b,c\ge4$, it follows that $d$ is maximized when $a=b=c=4$; hence $d\le \frac{3}{2}$, which is a contradiction.
\end{proof}

\begin{lemma}
    If $\det M=0$ and $\min\{a,b,c,d\}=3$, then $a=b=c=d=3$.
    \label{lem:deta=3}
\end{lemma}
    \begin{proof}
    By symmetry, we may assume that $1\le a\le b\le c\le d$.
    Since $a=3$, equation (\ref{eq:d}) becomes 
    $$d=\frac{bc+5b+5c+9}{3bc-b-c-5}.$$
    Let $b\ge 4$.
    As in the proof of Lemma \ref{lem:det}, $d$ is decreasing as a function of both $b$ and $c$; hence $d$ is maximized when $b=c=4$, which implies $d\le \frac{13}{7}<2<c$, which is a contradiction. Hence $b=3$ and $d=\displaystyle\frac{c+3}{c-1}$. Now since $d$ is decreasing as a function of $c$ and $d\ge c\ge 3$, it follows that $c=d=3$.
    \end{proof}

\begin{lemma}
    Let $\det M=0$ and $\min\{a,b,c,d\}=1$. 
   Assume, by symmetry, that $1=a\le b\le c\le d$. Then $b\le 5$. Moreover, 
    \begin{itemize}
        \item if $b=3$ and $c\ge7$, then $c=d=7$;
        \item if $b=4$, then $c=4$ and $d=9$;
        \item if $b=5$, then $c=d=5$.
    \end{itemize}
    \label{lem:deta=1}
\end{lemma}

    \begin{proof}
    Since $a=1$, equation (\ref{eq:d}) becomes 
    $$d=\frac{bc+3b+3c+5}{bc-b-c-3}.$$
    If $b\ge 6$, then as in the proof of Lemma \ref{lem:det}, $d$ is decreasing as a function of both $b$ and $c$; hence $d$ is maximized when $b=c=6$, which implies $d\le \frac{17}{4}<b\le c$, which is a contradiction.

    If $b=3$ and $c\ge7$, then since $d$ is decreasing in $c$, we have $d=\displaystyle\frac{3c+7}{c-3}\le 7$; since $d\ge c\ge7$, it follows that $d=c=7$. 
    If $b=5$, then equation (\ref{eq:d}) becomes $\displaystyle d=\frac{2c+5}{c-2}$; since $d\ge c\ge b=5$ and $d$ is decreasing in $c$, we necessarily have that $c=d=5$.
    Finally, let $b=4$. Then equation (\ref{eq:d}) becomes $\displaystyle d=\frac{7c+17}{3c-7}$. If $c=5$ or $c=6$, then $d\not\in\Z$, which is a contradiction. If $c\ge7$, then since $d$ is decreasing in $c$, we have $d\le \frac{33}{7}<c$, which is a contradiction. Hence $c\le 4$. But since $c\ge b=4$, we have $c=4$ and $d=9$.
    \end{proof}

\begin{lemma}
    If $\det M=0$ and two of the diagonals are 7, then the others are 1 and 3. 
    \label{lem:detab77}
\end{lemma}

\begin{proof}
    Up to symmetry, we may assume $a=b=7$. Then equation (\ref{eq:d}) becomes $\displaystyle d=\frac{c+5}{3c-1}$. If $c\ge4$, then since $d$ is decreasing in $c$ and $c\ge 4$, we have $d\le \frac{9}{11}$, which contradicts $d\ge 1$. If $c=3$, then $d=1$, as desired. If $c=2$, then $d\not\in \Z$, which is a contradiction. If $c=1$, then $d=3$, as desired. 
\end{proof}

\begin{lemma} If $\det M=0$ and two of the diagonals are $5$ and $5$ then the others are either: $1$ and $5$; or $2$ and $2$.
    \label{lem:55}
\end{lemma}

\begin{proof} Up to symmetry, assume $a=b=5$ and $c\le d$.
Equation (\ref{eq:d}) reduces to $d=\displaystyle\frac{c+4}{2c-1}$. First note that if $c=1$, then $d=5$. Now assume that $c\ge 3$; since $d$ is decreasing in $c$, it follows that $d\le \frac{7}{5}<c$, which is a contradiction. If follows that $c=2$ and $d=2$.
\end{proof}

\subsection{Proof of Theorem \ref{thm:p4=n}}

Let $S=\{v_1,\ldots,v_n\}\subset\Z^n$ be orthogonal with $p_4(S)=n$ and $I(S)=n$. Since $p_4(S)=n$, we have $p_j(S)=0$ for all $j\neq 4$. It follows from Lemma \ref{lem:equation} that $|\langle v_\alpha, e_i\rangle|\le 1$ for all $\alpha, i$. 

Fix $i$ and let $E_i=\{s,t,u,w\}$. Define $v_\alpha'=v_\alpha-\langle v_\alpha,e_i\rangle e_i$ and
let $$S_i=(S\setminus\{v_s, v_t, v_u, v_w\})\cup\{v_s', v_t', v_u', v_w'\}\subset\Z^{n-1}$$
be the subset obtained by projecting the vectors of $S$ onto $\Z^{n-1}=\langle e_1,\cdots,e_{i-1},e_{i+1},\ldots,e_n\rangle$. Let $C_i$ be the matrix whose columns are the vectors of $S_i$, ordered so that the first four columns of $C_i$ are $v_s', v_t', v_u',$ and $v_w'$. Then $Q_i=C_i^TC_i$ is a block diagonal matrix whose first block is the $4\times 4$ matrix $$M_i=\begin{bmatrix}a_s-1&-1&-1&-1\\-1&a_t-1&-1&-1\\-1&-1&a_u-1&-1\\-1&-1&-1&a_w-1\end{bmatrix}.$$ and whose remaining blocks are of the form $[a_j]$ for $j\not\in\{s,t,u,w\}$.
Since the columns of $C_i$ are linearly dependent (as $C_i$ is an $(n-1)\times n$ matrix), we have that $\det Q_i=0$. Since $a_j>0$ for all $j$, it follows that $\det M_i=0$. We will appeal to this fact multiple times throughout this section.

Let $B'$ be a block of $B$. If $B'$ satisfies the property that each column has exactly four nonzero entries, then we are done. Assume otherwise that $B'$ has a column that does not have exactly four nonzero entries. Up to reordering, let $B'=\begin{bmatrix} v_1&\cdots&v_k\end{bmatrix}$ and assume that $a_1\neq 4$. We may further assume (up to change of basis and negation) that $E_1=\{1,2,3,4\}$ and $\langle v_\alpha,e_1\rangle=1$ for $1\le \alpha\le 4$. Since $\langle v_1,v_2\rangle =0$, we may assume up to change of basis that $v_1=e_1+e_2+f$ and $v_2=e_1-e_2+g$, where $f,g\in\Z^{n-2}$ are vectors satisfying $\langle f, g\rangle=0$.
    Consider the matrix $M_1$ and set $a=a_1-1$, $b=a_2-1$, $c=a_3-1$, and $d=a_4-1$.
    Since $\det M_1=0$ and $a\neq 3$, it follows from Lemma \ref{lem:deta=3} that $\min\{a,b,c,d\}\neq3$. Hence we may assume that $\min\{a,b,c,d\}\le 2$. 
    Furthermore, up to reordering once again, we may assume that $a=\min\{a,b,c,d\}$ and $1\le a\le b\le c\le d$.
    Hence $a\le 2$. 
    We now show that $a\neq 2$.

    Suppose $a=2$; then up to a change of basis, we have $f=e_3$ and so $v_1=e_1+e_2+e_3$. 
    Since $\langle v_1,v_2\rangle=0$, it is clear that $3\not\in V_2$.
    Since $\langle v_1,v_\alpha\rangle=0$ for $\alpha=3,4$, we must have that: $2\in V_3$ or $3\in V_3$; and $2\in V_4$ or $3\in V_4$. Further note that since $|\langle v_\alpha, e_i\rangle|\le 1$ for all integers $i$ and $\alpha$, we cannot have $2,3\in V_3$ or $2,3\in V_4$. If $2\in V_3\cap V_4$ (and so $3\not\in V_3\cup V_4$), then since $E_1=E_2=\{1,2,3,4\}$ and $S$ is orthogonal, it follows that $E_3=\{1\}$, which is a contradiction. Now suppose $2\in V_3$ and $2\notin V_4$ (or similarly, $2\in V_4$ and $2\notin V_3$). Then $E_2\supset\{1,2,3,s\}$ and $E_3\supset\{1,4,t,u\}$ for some integers $s,t,u$. Since $3\in V_t$, $\langle v_t,v_1\rangle=0$, and $E_1=\{1,2,3,4\}$, it follows that $2\in V_t$ and so $t=s$. Now since $u\neq t$, it follows that $\langle v_u,v_1\rangle\neq 0$, which is a contradiction. Finally suppose that $3\in V_2\cap V_4$ (and so $2\notin V_3\cup V_4$). Then an argument similar to the previous one provides a contradiction. Hence $a\neq 2$
   and so $a=1$.
   
    To finish the proof of Theorem \ref{thm:p4=n}, we will show that (up to change of basis) $B'$ must be one of the two matrices listed in the statement of Theorem \ref{thm:p4=n}. 
    Since $a=1$ and $\langle v_\alpha,v_1\rangle=0$ for $\alpha=3,4$, we have that $v_1=e_1+e_2$, $v_2=e_1-e_2+g$, $v_3=e_1-e_2+h$, and $v_4=e_1-e_2+k$, for some vectors $g,h,k\in\Z^{n-2}$ satisfying $\langle g,h\rangle=-2$, $\langle g,k\rangle=-2$, and $\langle h,k\rangle =-2$. Hence $E_1=E_2=\{1,2,3,4\}$. 
    It follows from Lemma \ref{lem:deta=1} (applied to $M_1$) that $b\le 5$. Since $\langle g,h\rangle=-2$, we have that $b+1=a_2\ge 4$. We have three cases to consider: $b=3,4,5$.\\
    
    \noindent\underline{$b=3$}. 
    Up to change of basis, we may assume $g=e_3-e_4$ so that $v_2=e_1-e_2+e_3-e_4$. Since $\langle g,h\rangle=\langle g,k\rangle=-2$, we necessarily have that $h=-e_3+e_4+h'$ and $k=-e_3+e_4+k'$ for some vectors $h', k'\in\Z^{n-4}$ satisfying $\langle h',k'\rangle=-4$. Up to change of basis, we may assume that $h'=e_5-e_6+e_7-e_8+h''$ and $k'=-e_5+e_6-e_7+e_8+k''$ for some vectors $h'', k''\in\Z^{n-8}$. 
    It follows that $c=a_3-1\ge 7$ and $d=a_4-1\ge 7$. Now by Lemma \ref{lem:deta=1}, we have that $c=d=7$ and so $h''=k''=0$.    
    Let $s$ be the integer satisfying $E_3=\{2,3,4,s\}$. Up to reordering, let $s=5$. Since $\langle v_5, v_2\rangle=0$ and $1,2\not\in V_5$, it follows that $v_5=e_3+e_4+l$ for some vector $l\in\Z^{n-4}$ satisfying $\langle l,v_\alpha\rangle=0$ for $\alpha=3,4$. Note that $E_3=E_4=\{2,3,4,5\}$.
    Consider the matrix $M_3$.
    Since $b=3$, $c=7$, $d=7$, and $\det M_3=0$, it follows from Lemma \ref{lem:detab77} that $a_5-1=1$; that is $l=0$.     
    Next, up to reordering, we may assume that $E_5=\{3,4,6,7\}$. 
    Consider the matrix $M_5$.
    Since $\det M_5=0$ and $c=d=7$, it follows from Lemma \ref{lem:detab77} that $a_6-1=1$ and $a_7-1=3$ or vice versa; up to reordering, we may  assume the former.
    Since $\langle v_4, v_6\rangle=0$, we have $V_6=\{5,i\}$, where $i\in\{6,7,8\}$. Up to change of basis, we may assume that $i=6$ and $v_6=e_5+e_6$. Since $\langle v_7,v_6\rangle=\langle v_7,v_4\rangle=0$, and $a_7=4$, we necessarily have that $v_7=e_5-e_6-e_7+e_8$ (up to negating $v_7$). Note that $E_5=E_6=\{3,4,6,7\}$. Finally, we set $E_7=\{3,4,7,8\}$. Since $\langle v_8, v_\alpha\rangle=0$ for all $\alpha,$ we necessarily have (up to negating $v_8$) that $v_8=e_7+e_8+m$ for some vector $m\in\Z^{n-8}$; consequently, $E_7=E_8=\{3,4,7,8\}$. 
    Considering the matrix $M_7$ and arguing as above,
    we see that $m=0$. 
    We have thus shown (up to change of basis of $\Z^n$, and reordering and negating vectors of $S$) that $B'$ is comprised of the following vectors:
    \begin{align*}
        v_1&=e_1+e_2\\
        v_2&=e_1-e_2+e_3-e_4\\
        v_3&=e_1-e_2-e_3+e_4+e_5-e_6+e_7-e_8\\
        v_4&=e_1-e_2-e_3+e_4-e_5+e_6-e_7+e_8\\
        v_5&=e_3+e_4\\
        v_6&=e_5+e_6\\
        v_7&=e_5-e_6-e_7+e_8\\
        v_8&=e_7+e_8
    \end{align*} 
    Hence $B'$ is the first matrix listed in Theorem \ref{thm:p4=n}.\\

    \noindent\underline{$b=4$}. By Lemma \ref{lem:deta=1}, we have $c=4$ and $d=9$.  
    Up to change of basis, we may assume $g=e_3+e_4+e_5$. Since $\langle g,h\rangle=-2$ and $c=4$, we may assume up to change of basis that $h=-e_3-e_4+e_6$. Since $\langle g,k\rangle=-2$, we also have either: $3\in V_4$ and $\langle v_4,e_3\rangle=-1$; or $4\in V_4$ and $\langle v_4,e_4\rangle=-1$. Assume the former so that $k=-e_3+k'$, where $k'\in\Z^{n-3}$. Up to reordering, we may assume that $E_3=\{2,3,4,5\}$. 
    Consider the matrix $M_3$.
    Since $\det M_3=0$, $b=c=4$, and $d=7$,
    we see from equation (\ref{eq:det}) that $a_5-1=1$. 
    Since $\langle v_5,v_2\rangle=0$, up to negation, either $v_5=e_3-e_4$ or $v_5=e_3-e_5$. In the latter case, we have $\langle v_5, v_3\rangle\neq 0$, which is a contradiction. In the former case, since $\langle v_5, v_4\rangle=0$, we must have $4\in V_4$; but since $\langle v_4, v_2\rangle=0$, it follows that $v_4=e_1-e_2-e_3-e_4+k''$ for some vector $k''\in\Z^{n-4}$. But then we have $\langle v_4, v_3\rangle \neq0$, which is a contradiction.\\

    \noindent\underline{$b=5$}. By Lemma \ref{lem:deta=1}, $c=d=5$. Up to change of basis, let $g=-e_3+e_4+e_5-e_6$. Since $\langle g,h\rangle=-2$, we may assume up to change of basis that $h=e_3-e_4+h'$ for some vector $h'\subset\Z^{n-4}$ satisfying $\langle g,h'\rangle=0$. 
    Suppose $5\in V_3$ (or similarly, $6\in V_3$); then since $\langle g,h'\rangle=0$, we have $6\in V_3$ and up to change of basis $h=e_3-e_4+e_5+e_6.$ Since $\langle v_4,v_2\rangle=0$ and $|\langle v_\alpha, e_i\rangle|\le 1$ for all $\alpha, i$, $|(V_2\cap V_4)\setminus\{1,2\}|$ is even; up to change of basis, we may assume that $3,4\in V_4$ and $v_4=e_1-e_2+e_3-e_4+k'$ for some vector $k'\in\Z^{n-4}$; but then $\langle v_4,v_3\rangle\neq 0$, which is a contradiction. Hence $5,6\not\in V_3$ and so we may assume up to change of basis that $v_3=e_1-e_2+e_3-e_4-e_7+e_8.$ 
    Now suppose $3\in V_4$ and let $E_3=\{2,3,4,s\}$.
    Consider the matrix $M_3$.
    Since $\det M_3=0$ and $b=c=d=5$, we necessarily have that $a_s-1=1$ by equation (\ref{eq:det}).
    Since $\langle v_s,v_2\rangle =\langle v_s,v_3\rangle=0$, we necessarily have that $v_s=e_3+e_4$, up to negation. Now since $\langle v_s,v_4\rangle=0$, we have that $v_4=e_1-e_2\pm (e_3- e_4)+k'$ for some vector $k'\in\Z^{n-4}$; but then either $\langle v_4,v_2\rangle\neq0$ or $\langle v_4,v_3\rangle\neq0$, both of which are contradictions. Hence $3\not\in V_4$. A similar argument shows that $4\not\in V_3$.
    Hence since $\langle v_4,v_2\rangle=\langle v_4,v_3\rangle=0$, we have $5,6,7,8\in V_4$ and, up to change of basis,  $v_4=e_1-e_2-e_5+e_6+e_7-e_8$. 
    
    Up to reordering, we may now assume that $E_3=\{2,3,5,6\}$ and $a_5\le a_6$. 
    Consider the matrix $M_3$; set $x=a_5-1$ and $y=a_6-1$. Since $\det M_3=0$ and $b=c=5$, it follows from Lemma \ref{lem:55} that
    \begin{equation}
        (x,y)\in\{(1,5), (2,2)\}
        \label{eq:sys}
    \end{equation}
     Since $\langle v_5, v_2\rangle=0$, we have $4\in V_5$, $5\in V_5$, or $6\in V_5$. First suppose $4\not\in V_5$; then $5\in V_5$ or $6\in V_5$, but not both. Up to change of basis, we may assume the former and so (up to negation) $v_5=e_3+e_5+l$ for some vector $l\in\Z^{n-6}$. Since $\langle v_5, v_4\rangle=0$, we must have $7\in V_5$ or $8\in V_5$, but not both. Up to change of basis, we may assume the former so that $v_5=e_3+e_5+e_7+l'$ for some vector $l'\in\Z^{n-7}$.
     It follows from equation (\ref{eq:sys}) that $l'=0$ and $a_6=y+1=3$. Since $\langle v_6,v_5\rangle=0$, we must have $5\in V_6$ or $7\in V_6$, but not both; up to change of basis, we may assume the former so that (up to negation) $v_7=e_3-e_5+e_i$ for some $i$. But then $\langle v_7,v_2\rangle\neq0$, which is a contradiction.    
    Hence we have $4\in V_5$; similarly, $4\in V_6$.
    
    If $\langle v_5,e_i\rangle=\langle v_6,e_i\rangle$ for $i=3,4$, it follows that $|V_5\cap V_6|\ge4$ (since $\langle v_5,v_6\rangle=0$), implying that $a_5,a_6\ge 4$; but this contradicts the solution set (\ref{eq:sys}). Hence we may assume, up to negation, that $v_5=e_3+e_4+m$ and $v_6=e_3-e_4+n$ for some vectors $m,n\in\Z^{n-4}$.
    Since $\langle v_6,v_2\rangle=\langle v_6,v_3\rangle=0$, we must have that $v_6=e_3-e_4+e_5-e_6+e_7-e_8+n'$ for some vector $n'\in\Z^{n-8}$. It now follows from (\ref{eq:sys}) that $m=0$ and $n'=0$.

    We now have $E_3=E_4=\{2,3,5,6\}$, $E_5,E_6\subset\{2,4,6\}$, and $E_7,E_8\supset\{3,4,6\}$. Up to reordering, we may assume $5\in V_7$. Then since $\langle v_2,v_7\rangle=0$, we have $6\in V_7$; hence $E_5=E_6=\{2,4,6,7\}$. Since three of the diagonal entries of the matrix $M_5$ are $5$, Lemma \ref{lem:55} implies that $a_7=2$; up to negation, we have that $v_7=e_5+e_6$. Similarly, up to reordering, we have that $v_8=e_7+e_8$ and $E_7=E_8=\{3,4,6,8\}.$
    We have thus shown (up to change of basis of $\Z^n$, and reordering and negating vectors of $S$) that $B'$ is comprised of the following vectors:
    \begin{align*}
        v_1&=e_1+e_2\\
        v_2&=e_1-e_2-e_3+e_4+e_5-e_6\\
        v_3&=e_1-e_2+e_3-e_4-e_7+e_8\\
        v_4&=e_1-e_2-e_5+e_6+e_7-e_8\\
        v_5&=e_3+e_4\\
        v_6&=e_3-e_4+e_5-e_6+e_7-e_8\\
        v_7&=e_5+e_6\\
        v_8&=e_7+e_8
    \end{align*} 
    Hence $B'$ is the second matrix listed in Theorem \ref{thm:p4=n}.\qed

\subsection{Proof of Theorem \ref{thm:p4notcb}}

Let $S=\{v_1,\ldots,v_n\}\subset\Z^n$ be an orthogonal subset with $p_4(S)=n$ and let $B=\begin{bmatrix} v_1&\cdots&v_n\end{bmatrix}.$ By Lemma \ref{lem:equation}, we have
    $$I(S)=n+ \sum_{\substack{\alpha, i\\ \langle v_\alpha, e_i \rangle \neq 0}}(\langle v_\alpha, e_i \rangle ^2 - 1).$$
    If $|\langle v_u,e_i\rangle|\ge 2$ for some integers $u,i$, then by Corollary \ref{cor:wuobstruction}, $S$ is not cubiquitous, and we are done. Assume instead $|\langle v_u,e_i\rangle|\le 1$ for all integers $u,i$. We thus have that $I(S)=n$ and so $B$ is block diagonal whose blocks are of the form listed in Theorem \ref{thm:p4=n}. 
    By Lemma \ref{lem:block}, we need only show that each of these blocks is not cubiquitous. 
    To this end, let us assume that $B$ is itself one of the blocks.
    
    First suppose that every column of $B$ has exactly four nonzero entries; that is $|V_\alpha|=4$ for all $\alpha$. Then $B^TB=4I$. It follows that $\det(B^TB)=4^n$ and so $\det B=2^n$. By Theorem \ref{thm:greeneowens1}, $S$ is cubiquitious if and only if the sublattice $\Lambda$ generated by $S$ has a Haj{\'o}s basis.
    If $\Lambda$ has a Haj{\'o}s basis, then for every vector $x\in\Lambda$, the first nonzero entry of $x$ is even; this follows from the fact that every vector in $\Lambda$ is a linear combination of the Haj{\'o}s basis vectors, whose first nonzero entries are $2$.  However, since the entries of $v_1$, for example, are either $-1,0,$ or $1$, $\Lambda$ does not have a Haj{\'o}s basis and hence is not cubiquitous.

    Finally, let $B$ be either of the two remaining blocks given by Theorem \ref{thm:p4=n}. Since $\det B<2^n$, we cannot appeal to Theorem \ref{thm:greeneowens1}. However, a quick computer calculation shows that for any vector $y$ in the unit cube $C=(-2,1,1,1,1,1,1,1)+\{0,1\}^8$, the system $Bx=y$ has no solution in $\Z^8$. Hence $C$ does not contain a point of the sublattice generated by $S$ and so $S$ is not cubiquitous.\qed

\section{Cubiquitous Orthogonal Subsets}\label{sec:proof}

Let $S$ be an orthogonal subset. 
Note that if there exist integers $i$ and $s$ such that $V_s=\{i\}$, then by orthogonality $E_i=\{s\}$. We call the subset $$S\setminus\{v_s\}\subset\Z^{n-1}=\langle e_1,\ldots,e_{i-1},e_{i+1},\ldots,e_n\rangle$$
a \textit{projection} of $S$.
Note that after performing all possible projections, we obtain an orthogonal subset with $|V_s|\ge2$ for all $s$.

\begin{lemma} If $S=\{v_1,\ldots,v_n\}\subset\Z^n$ is orthogonal with $|V_\alpha|\ge 2$ for all $\alpha$, then $p_1(S) = 0$.
\label{lem:p1=0}
\end{lemma}

\begin{proof} Assume $p_1(S)\neq 0$. Then there exist integers $i$ and $s$ such that $E_i=\{s\}$; up to reordering, we may assume that $s=n$. Hence $v_n=xe_i+f$ for some $x\in\Z\setminus\{0\}$ and some vector $f\in\Z^{n-1}$. Moreover, since $|V_n|\ge2$, $f\neq 0$. Notice that the subset $$S'=(S\setminus\{v_n\})\cup\{f\}\subset\Z^{n-1}$$
is orthogonal. Label the vectors of $S'$ by: $v'_\alpha=v_\alpha$ for all $\alpha\neq n$ and $v_n'=f$. 
Let $B=\begin{bmatrix} v_1'&\cdots &v_n'\end{bmatrix}$ and $Q=B^TB$. Then the $(s,t)-$th entry of $Q$ is given by $\langle v'_s,v'_t\rangle$; that is, $Q$ is the diagonal matrix with diagonal entries $\langle v_1',v_1'\rangle,\ldots,\langle v_n',v_n'\rangle$. Since $\det Q=\prod_{i=1}^n\langle v_\alpha',v_\alpha'\rangle\neq 0$, $S'$ contains $n$ linearly independent vectors in $\Z^{n-1}$, which is impossible.\end{proof}

\begin{lemma}
Let $S=\{v_1,\ldots,v_n\}\subset\Z^n$ be orthogonal with $|V_\alpha|\ge 2$ for all $\alpha$. Let $i\in\mathcal{P}_2$ such that $E_i = \{s, t\}$ and $|\langle v_s,e_i\rangle|=|\langle v_t,e_i\rangle|=1$. Then up to a change of basis, $v_s = e_i + e_j$ and $v_t = e_i - e_j$ for some $j\in \mathcal{P}_2$.
\label{lem:p2}
\end{lemma}

\begin{proof}
By Lemma \ref{lem:p1=0}, we have that $p_1(S)=0$.
Up to relabeling, we may assume that $s=n-1$ and $t=n$. Up to negation, $v_{n-1}=e_i+f$ and $v_n=e_i+g$ for some nonzero vectors $f,g\in\bZ^n$ satisfying $\langle f,g\rangle=-1$. Consider the subset 
$$S'=(S\setminus\{v_{n-1}, v_n\})\cup\{f,g\}\subset\Z^{n-1}.$$
Label the vectors of $S'$ by: $v'_\alpha=v_\alpha$ for all $\alpha\not\in\{ n-1,n\}$, $v_{n-1}'=f$, and $v_n'=g$.
Then $\langle v'_{n-1}, v'_n\rangle=-1$ and $\langle v'_\alpha,v'_\beta\rangle=0$ for all $\alpha\neq\beta$ where $\beta\neq n$.  
Let $B=\begin{bmatrix} v_1'&\cdots &v_n'\end{bmatrix}$ and $Q=B^TB$. Then the $(s,t)-$th entry of $Q$ is given by $\langle v'_s,v'_t\rangle$; that is, $Q$ is the block diagonal matrix
$$Q=\begin{bmatrix} \langle v_1,v_1\rangle & & & & \\  &\ddots& & &\\ & & \langle v_{n-2},v_{n-2}\rangle & &\\ & & & \langle f,f\rangle & -1\\ &&& -1& \langle g,g\rangle \end{bmatrix}.$$
 Notice that $\det Q=(\langle f,f\rangle\langle g,g\rangle-1)\prod_{\alpha=1}^{n-2}\langle v_\alpha, v_\alpha\rangle$.
If $\langle f, f\rangle\ge2$ or $\langle g,g\rangle \ge 2$, then $\det Q\neq0$; it follows that $S'$ consists of $n$ linearly independent vectors in $\Z^{n-1}$, which is not possible. Thus $\langle f, f\rangle=\langle g,g\rangle=1$ and so, up to a change of basis, $v_s=e_i+e_j$ and $v_t=e_i-e_j$ for some integer $j\neq i$. Finally by orthogonality, $j\in \mathcal{P}_2$.
\end{proof}

For $\alpha\ge2$, let $\mathcal{Q}^S_\alpha$ be the subset of $\mathcal{P}^S_{\alpha}$ defined by $$\mathcal{Q}^S_\alpha=\{i\in\mathcal{P}^S_\alpha\,:\, |\langle v_{u},e_i\rangle|\ge2\text{ for some } u\in E_i^S\}.$$ 
As usual, we will suppress the superscript in the notation if the subset being considered is understood.

For $i\in\mathcal{P}_2$, set $E_i:=\{s(i),t(i)\}$. 
Suppose $i\in \mathcal{P}_2\setminus\mathcal{Q}_2$. Then by Lemma \ref{lem:p2}, $v_{s(i)}=e_{i}+e_j$ and $v_{t(i)}=e_i-e_j$ for some $j\in\mathcal{P}_2$. We call the subset
$$S\setminus\{v_{s(i)}, v_{t(i)}\}\subset\Z^{n-2}=\langle e_1,\ldots,e_{i-1},e_{i+1},\ldots,e_{j-1},e_{j+1},\ldots,e_n\rangle$$
a \textit{double projection} of $S$. 
Note that after performing all possible double projections, we obtain an orthogonal subset with $\mathcal{P}_2=\mathcal{Q}_2$.

\begin{proposition}
 Let $S=\{v_1,\ldots,v_n\}\subset\Z^n$ be an orthogonal subset with $|V_\alpha|\ge2$ for all $\alpha$ and $\mathcal{P}_2=\mathcal{Q}_2$. Then $S$ is not cubiqutious.
 \label{prop:p2}
\end{proposition}

\begin{proof}
First note that by Lemma \ref{lem:p1=0}, since $|V_\alpha|\ge2$ for all $\alpha$, $p_1(S)=0$.
Since $\sum_{j=1}^np_j(S)=n\ge1$, we have that $p_j(S)\ge1$ for some $j\ge2$. 

Since $\mathcal{P}_2=\mathcal{Q}_2$, up to relabeling, we may assume that $|\langle v_{s(i)},e_i\rangle|\ge2$ for all $i\in\mathcal{P}_2$. Hence
\begin{equation}
\sum_{i\in\mathcal{P}_2}(\langle v_{s(i)},e_i\rangle^2-1)\ge 3p_2(S).
\label{eq:inequality1}
\end{equation}

\noindent Next note that
\begin{equation}
\begin{split}
|R_o|&\ge |\{i\in\mathcal{P}_3\,:\,\text{$k_i$ is odd}\}|\\
&\ge |\{i\in\mathcal{P}_3\,:\,\text{$|\langle v_u,e_i\rangle|= 1$ for all $u\in E_i$}\}|\\
&= |\mathcal{P}_3\setminus\mathcal{Q}_3|\\
&\ge p_3(S)-\sum_{\substack{i\in \mathcal{P}_3\\u\in E_i}}(\langle v_u,e_i\rangle^2-1).
\end{split}
\label{eq:inequality2}
\end{equation}

\noindent Now by Lemmas \ref{lem:equation} and \ref{lem:p1=0}, and the fact that $\sum_{i=2}^{n}p_j(S)=n$, we have that 
\begin{equation}
\begin{split}
       I(S) &=-p_2(S)+ \sum_{j=4}^{n}(j-3)p_j(S) + \sum_{\substack{\alpha, i\\ \langle v_\alpha, e_i \rangle \neq 0}}(\langle v_\alpha, e_i \rangle ^2 - 1)\\
        &= n-2p_2(S)-p_3(S)+\sum_{j=4}^{n}(j-4)p_j(S) + \sum_{\substack{\alpha, i\\ \langle v_\alpha, e_i \rangle \neq 0}}(\langle v_\alpha, e_i \rangle ^2 - 1)\\
       &\ge n-\Big(2p_2(S)- \sum_{i\in\mathcal{P}_2}(\langle v_{s(i)},e_i\rangle^2-1)\Big)-\Big(p_3(S)-\sum_{\substack{i\in\mathcal{P}_3\\ u\in E_i}}(\langle v_{u},e_i\rangle^2-1)\Big)\\
       &\qquad\qquad\qquad\qquad\qquad\qquad\qquad\qquad\quad+\sum_{\substack{i\in\mathcal{P}_4\\ u\in E_i}}(\langle v_{u},e_i\rangle^2-1)+\sum_{j=4}^{n}(j-4)p_j(S)\\
       &\ge n+p_2(S)-|R_o|+\sum_{\substack{i\in\mathcal{P}_4\\ u\in E_i}}(\langle v_{u},e_i\rangle^2-1)+\sum_{j=4}^{n}(j-4)p_j(S),\\
    \end{split}
    \label{eq:inequalitystring}
\end{equation}
where the final inequality follows from inequalities (\ref{eq:inequality1}) and (\ref{eq:inequality2}).
If $p_2(S)=|\mathcal{Q}_2|\neq 0$ or $p_j(S)\neq 0$ for some $j> 4$, then the last expression is strictly greater than $n-|R_o|$, implying that $I(S)>n-3|R_o|$; hence by Corollary \ref{cor:wuobstruction}, $S$ is not cubiquitous, and we are done. 
If $|R_o|>0$, then $I(S)\ge n-|R_o|>n-3|R_o|$, implying once again that $S$ is not cubiquitious.  

We now suppose that $|\mathcal{Q}_2|=0$, $p_j(S)=0$ for all $j>4$, and $|R_o|=0$; hence $p_3(S)+p_4(S)=n$. If $p_4(S)=n$, then $S$ is not cubiquitous by Theorem \ref{thm:p4notcb}. Now suppose $p_4(S)\neq n$ so that that $p_3(S)\neq 0$. Since $|R_o|=0$, we have that $\mathcal{P}_3=\mathcal{Q}_3$; that is, for all $i\in \mathcal{P}_3$, there exists $u$ such that $|\langle v_u,e_i\rangle|\ge 2$. It follows that 
\begin{equation}
\sum_{i\in\mathcal{P}_3}(\langle v_{s(i)},e_i\rangle^2-1)\ge 3p_3(S).
\label{eq:inequality4}
\end{equation}
Starting with the third line of inequality (\ref{eq:inequalitystring}) and applying inequality (\ref{eq:inequality4}), we have that 
\begin{align*}
    I(S)&\ge n-\Big(p_3(S)-\sum_{\substack{i\in\mathcal{P}_3\\ u\in E_i}}(\langle v_{u},e_i\rangle^2-1)\Big)+\sum_{\substack{i\in\mathcal{P}_4\\ u\in E_i}}(\langle v_{u},e_i\rangle^2-1)\\
    &\ge n-\Big(p_3(S)-\sum_{\substack{i\in\mathcal{P}_3\\ u\in E_i}}(\langle v_{u},e_i\rangle^2-1)\Big)\\
    &\ge n+2p_3(S)\\
    &>n\\
    &=n-3|R_o|.
\end{align*}
It follows from Corollary \ref{cor:wuobstruction} that $S$ is cubiquitous. 
\end{proof}

We are now ready to prove Theorem \ref{thm:main}.

\begin{proof}[Proof of Theorem \ref{thm:main}] 
First note that if $\Lambda$ is an orthogonal sublattice with basis as described in the statement of Theorem \ref{thm:main}, then it is the direct sum of cubiquitous sublattices; it follows from Lemma \ref{lem:block} that $\Lambda$ is cubiquitous.
Now suppose $S=\{v_1,\ldots,v_n\}$ is a cubiquitous orthogonal subset and let $\Lambda$ be the lattice generated by $S$. Let $B=\begin{bmatrix}v_1&\cdots& v_n\end{bmatrix}$. 
Perform all possible projections and double projections of $S$ to obtain an orthogonal subset $S'$ with $p_1(S')=0$ (by Lemma \ref{lem:p1=0}) and $\mathcal{P}^{S'}_2=\mathcal{Q}^{S'}_2$. If $S'$ is nonempty, then by Lemma \ref{lem:block}, $S'$ is also cubiquitous. However, this directly contradicts Proposition \ref{prop:p2}, which tells us that $S'$ is not cubiquitous. Thus $S'$ is necessarily empty.
It follows that $p_j(S)=0$ for all $j\ge 3$ and $|\mathcal{Q}^S_2|=0$. By Lemma \ref{lem:p2}, for all $i\in\mathcal{P}^S_2$ we have that $v_{s(i)}=e_i-e_j$ and $v_{t(i)}=e_i+e_j$ for some $j\in\mathcal{P}^S_2$ (up to a change of basis). Moreover, if $i\in \mathcal{P}^S_1$ and $E_i=\{s\}$, then $v_s=ae_i$ for some integer $a$. Thus, up to swapping columns, 
 $B$ is a block matrix with blocks of the form $[a]$ and $\begin{bmatrix}1&-1\\1&1\end{bmatrix}$; that is, $\Lambda$ is a direct sum of sublattices with bases of the form $\{[a]\}$ and $\{[1,1]^T, [-1,1]^T\}$.
Since $\text{Span}\{[a]\}$ is not cubiquitous when $|a|\ge 3$, it follows that each $1\times 1$ block must be of the form $[\pm1]$ or $[\pm2]$. Up to a change of basis, the result follows.\end{proof}

\section{Application to alternating connected sums of torus links}\label{sec:toruslinks}

In this section we consider the applications of our lattice embedding results to the connected sums of torus links, and discuss some related questions. 
A connected sum of links is alternating if and only if each link is alternating (\cite[Theorem 1]{Menasco}). Moreover, the only alternating torus links are $T_{2,k}$ for some integer $k$ (see for example, \cite[Theorem 7.3.2]{murasugi}), whose double branched cover is $\Sigma(T_{2,k}) = L(k,1).$ 
We will assume throughout that each torus knot is negative; that is it is of the form $T(2,k)$, where $k<1$.

In general, the connected sum of two links $L_1$ and $L_2$ depends on the components of each link that are summed together. However, the double branched cover of both of these connected sums are diffeomorphic to $\Sigma(L_1)\#\Sigma(L_2)$. 
Let $L$ be a connected sum of the torus links $T(2,k_1), \ldots, T(2,k_n)$. Note that if we connect sum another alternating torus link $T(2,k_{n+1})$ to $L$, then the choice of component of $T(2,k_{n+1})$ does not matter; each choice will yield isotopic links (if we do not order the link components of $T(2,k_{n+1})$).
Let $K$ and $K'$ be components of $L$ and consider the connected sums $L_K$ and $L_{K'}$ obtained by connect summing $T(2,k_{n+1})$ to $K$ and $K'$, respectively. We will see that lattices induced by $L_K$ and $L_{K'}$ are isometric; hence we may focus on the lattices induced by the connected sums of the form given in Figure \ref{fig:orthoa}.

 \begin{figure}
\centering
\begin{overpic}
     [scale=.7]{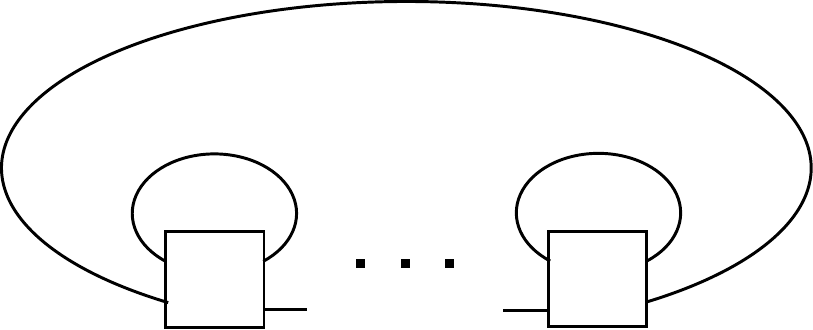}
     \put(24,4){\Large{$k_1$}}
     \put(71,4){\Large{$k_n$}}
\end{overpic}
     \caption{A connected sum of alternating torus links.  Each integer $k_i$ indicates the number of right-handed half twists. }
     \label{fig:orthoa}
\end{figure}

\begin{proposition}[c.f. \cite{greenemutation}] Let $B_{K}$ and $B_{K'}$ denote the black checkerboard surfaces for $L_K$ and $L_{K'}$, respectively. Then $\Sigma(B_K)$ is diffeomorphic to $\Sigma(B_{K'})$.
\label{prop:diffeo}
\end{proposition}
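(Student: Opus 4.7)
The plan is to exhibit $\Sigma(B_K)$ as a boundary connect sum of two $4$-manifolds whose diffeomorphism types depend only on $L$ and on $T(2, k_{n+1})$ (not on the choice of component $K$), and then invoke uniqueness of boundary connect sum along connected boundaries to conclude.

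First, I would observe that topologically the surface $B_K$ is the boundary connect sum $B_L \,\natural\, B_{T(2, k_{n+1})}$, with the connecting band localized in a small $3$-ball of $S^3$ meeting $L$ only near a point of $K$. After pushing everything into $B^4$, the key step is to establish the general principle that for properly embedded surfaces $F_1, F_2 \subset B^4$ lying in disjoint sub-$4$-balls and joined by a band,
\[
\Sigma(B^4,\, F_1 \natural F_2) \;\cong\; \Sigma(B^4, F_1) \,\natural\, \Sigma(B^4, F_2).
\]
To prove this, I would decompose $B^4$ along a properly embedded separating $3$-ball $D$ whose interior contains the band and whose complementary pieces contain $F_1$ and $F_2$ respectively. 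The double branched cover of $D$ along the trivially embedded core arc of the band is again a $3$-ball, so the global double cover is obtained by gluing $\Sigma(B^4, F_1)$ to $\Sigma(B^4, F_2)$ along a $3$-ball in their boundaries, which is exactly a boundary connect sum.

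Applying this with $F_1 = B_L$ and $F_2 = B_{T(2, k_{n+1})}$ yields $\Sigma(B_K) \cong \Sigma(B_L) \,\natural\, \Sigma(B_{T(2, k_{n+1})})$. Because $L$ and $T(2, k_{n+1})$ are non-split alternating links, both $\partial \Sigma(B_L) = \Sigma(L)$ and $\partial \Sigma(B_{T(2, k_{n+1})}) = L(k_{n+1}, 1)$ are connected, so the boundary connect sum of oriented $4$-manifolds is uniquely determined independent of the chosen gluing sites. The identical identification applies to $\Sigma(B_{K'})$, forcing $\Sigma(B_K) \cong \Sigma(B_{K'})$.

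The main obstacle is justifying the local fact in the pushed-in setting: one must verify that the band realizing the link connect sum and the neighborhoods of the pushed-in checkerboard surfaces can be arranged so that a separating $3$-ball in $B^4$ cleanly splits the configuration into two standard pieces. I expect this to follow from the locality of link connect sum---choose a summing $2$-sphere in $S^3$ intersecting $L_K$ in two points, and extend it to a properly embedded $3$-ball in $B^4$ transverse to the pushed-in surface---but one must keep careful track of how the black region of the checkerboard coloring extends across the band, in order to be sure the intersection of the pushed-in surface with $D$ really is a single trivial arc.
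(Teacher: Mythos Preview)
Your approach is correct and takes a genuinely different route from the paper's. The paper argues entirely at the level of Tait graphs and Kirby calculus: it writes down the Tait graphs $\Gamma_{B_K}$ and $\Gamma_{B_{K'}}$ explicitly (the connect sum with $T(2,k_{n+1})$ corresponds to identifying a vertex of its two-vertex Tait graph with the vertex $v_a$, respectively $v_b$, of the Tait graph of $L$), deletes the vertex corresponding to $K$ in each to get handlebody diagrams for $\Sigma(B_K)$ and $\Sigma(B_{K'})$, and then observes that a single $2$-handle slide carries one diagram to the other. So the paper's proof is a concrete, diagram-level computation tailored to this particular shape of Tait graph.

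Your argument is more structural: you isolate the general fact that the bounded checkerboard surface of a connected sum of alternating diagrams splits, as a properly embedded surface in $B^4$, along a separating $3$-ball meeting it in one unknotted arc, so that the double branched cover is the boundary connect sum $\Sigma(B_L)\,\natural\,\Sigma(B_{T(2,k_{n+1})})$, and then you invoke uniqueness of boundary connect sum of oriented $4$-manifolds with connected boundaries. This is conceptually cleaner and would apply verbatim with $L$ replaced by any non-split alternating link, not just sums of $T(2,k_i)$'s. The price is exactly the local check you flag at the end---that the bounded checkerboard coloring extends across the summing band so that the separating sphere meets the surface in a single arc---whereas the paper's handle-slide bypasses this by computing everything explicitly on the Tait-graph level.
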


\begin{proof}
By convention, $B_K$ and $B_{K'}$ are bounded. Let $\Gamma_B$ and $\Gamma_{B'}$ denote the respective Tait graphs of $B$ and $B'$. 
Let $v_a$ and $v_b$ denote the vertices of the Tait graph for $L$ corresponding to $K$ and $K'$. Let $-a$ and $-b$ denote the weights of these vertices. Consider the Tait graphs of $T(2,k_{n+1})$ and $L$ shown in Figure \ref{fig:taita}. The Tait graphs $\Gamma_B$ and $\Gamma_{B'}$ are then the graphs shown in Figure \ref{fig:taitb}. To produce handlebody diagrams for $\Sigma(B)$ and $\Sigma(B')$, we must first delete a vertex from each graph. Deleting the vertices corresponding to $v_a$ in both graphs, yields the graphs in Figure \ref{fig:taitc}. It is now easy to see that the obvious handlebody diagram for $\Sigma(B')$ can be obtained from the obvious handlebody diagram of $\Sigma(B)$ by sliding the 2-handle corresponding to $v_b$ over the 2-handle corresponding to the isolated vertex with weight $-k_{n+1}$. The result follows.
\end{proof}

\begin{figure}
\centering
\begin{subfigure}{.9\textwidth}
\centering
\begin{tikzpicture}[dot/.style = {circle, fill, minimum size=1pt, inner sep=0pt, outer sep=0pt}]
\tikzstyle{smallnode}=[circle, inner sep=0mm, outer sep=0mm, minimum size=2mm, draw=black, fill=black];

\node[smallnode, label={[label distance=.1cm]90:$-k_{n+1}$}] (1) at (0,0) {};
\node[smallnode, label={[label distance=.1cm]90:$-k_{n+1}$}] (2) at (2,0) {};

\node[smallnode, label={[label distance=0cm]90:$-a$}] (a) at (4,0) {};
\node[smallnode, label={[label distance=0cm]90:$-b$}] (b) at (8,0) {};
\node[rectangle,draw,minimum width = 2cm, minimum height = 2cm] (r) at (6,0) {T};

\node (c) at (4.7, .4){};
\node (d) at (4.7, -.4){};
\node (e) at (7.3, .4){};
\node (f) at (7.3, -.4){};
\node (g) at (1, .3){};
\node (h) at (1, -.3){};

\draw[-] (1) to [out=30, in=150] (2);
\draw[-] (1) to [out=-30, in=210] (2);
\draw[-] (a) to [out=30, in=160](r);
\draw[-] (a) to [out=-30, in=200](r);
\draw[-] (b) to [out=210, in=-20] (r);
\draw[-] (b) to [out=150, in=20] (r);
\draw[dotted, thick] (c) -- (d);
\draw[dotted, thick] (e) -- (f);
\draw[dotted, thick] (g) -- (h);
\end{tikzpicture}
\caption{Tait graphs of $T(2,k_{n+1})$ and $L$}
\label{fig:taita}
\end{subfigure}

\begin{subfigure}{0.9\textwidth}
\centering
\vspace{.5cm}
\begin{tikzpicture}[dot/.style = {circle, fill, minimum size=1pt, inner sep=0pt, outer sep=0pt}]
\tikzstyle{smallnode}=[circle, inner sep=0mm, outer sep=0mm, minimum size=2mm, draw=black, fill=black];

\node[smallnode, label={[label distance=0cm]90:$-k_{n+1}$}] (1) at (2,0) {};
\node[smallnode, label={[label distance=.1cm]270:$-a-k_{n+1}$}] (a) at (4,0) {};
\node[smallnode, label={[label distance=-.1cm]90:$-b$}] (b) at (8,0) {};
\node[rectangle,draw,minimum width = 2cm, minimum height = 2cm] (r) at (6,0) {T};
\node (c) at (4.7, .4){};
\node (d) at (4.7, -.4){};
\node (e) at (7.3, .4){};
\node (f) at (7.3, -.4){};
\node (g) at (3, .3){};
\node (h) at (3, -.3){};

\draw[-] (1) to [out=30, in=150] (a);
\draw[-] (1) to [out=-30, in=210] (a);
\draw[-] (a) to [out=30, in=160](r);
\draw[-] (a) to [out=-30, in=200](r);
\draw[-] (b) to [out=210, in=-20] (r);
\draw[-] (b) to [out=150, in=20] (r);
\draw[dotted, thick] (c) -- (d);
\draw[dotted, thick] (e) -- (f);
\draw[dotted, thick] (g) -- (h);
\node[smallnode, label={[label distance=0cm]90:$-k_{n+1}$}] (11) at (15,0) {};
\node[smallnode, label={[label distance=0cm]90:$-a$}] (a1) at (9,0) {};
\node[smallnode, label={[label distance=.1cm]270:$-b-k_{n+1}$}] (b1) at (13,0) {};
\node[rectangle,draw,minimum width = 2cm, minimum height = 2cm] (r1) at (11,0) {T};
\node (c1) at (9.7, .4){};
\node (d1) at (9.7, -.4){};
\node (e1) at (12.3, .4){};
\node (f1) at (12.3, -.4){};
\node (g1) at (14, .3){};
\node (h1) at (14, -.3){};

\draw[-] (b1) to [out=30, in=150] (11);
\draw[-] (b1) to [out=-30, in=210] (11);
\draw[-] (a1) to [out=30, in=160](r1);
\draw[-] (a1) to [out=-30, in=200](r1);
\draw[-] (b1) to [out=210, in=-20] (r1);
\draw[-] (b1) to [out=150, in=20] (r1);
\draw[dotted, thick] (c1) -- (d1);
\draw[dotted, thick] (e1) -- (f1);
\draw[dotted, thick] (g1) -- (h1);
\end{tikzpicture}
\caption{The Tait graphs $\Gamma_B$ and $\Gamma_{B'}$}\label{fig:taitb}
\end{subfigure}

\begin{subfigure}{0.9\textwidth}
\centering
\vspace{.5cm}
\begin{tikzpicture}[dot/.style = {circle, fill, minimum size=1pt, inner sep=0pt, outer sep=0pt}]
\tikzstyle{smallnode}=[circle, inner sep=0mm, outer sep=0mm, minimum size=2mm, draw=black, fill=black];

\node[smallnode, label={[label distance=-.1cm]90:$-k_{n+1}$}] (1) at (3,0) {};
\node[smallnode, label={[label distance=0cm]90:$-b$}] (b) at (8,0) {};
\node[rectangle,draw,minimum width = 2cm, minimum height = 2cm] (r) at (6,0) {T};
\node (e) at (7.3, .4){};
\node (f) at (7.3, -.4){};

\draw[-] (b) to [out=210, in=-20] (r);
\draw[-] (b) to [out=150, in=20] (r);
\draw[dotted, thick] (e) -- (f);

\node[smallnode, label={[label distance=0cm]90:$-k_{n+1}$}] (11) at (15,0) {};
\node[smallnode, label={[label distance=.1cm]270:$-b-k_{n+1}$}] (b1) at (13,0) {};
\node[rectangle,draw,minimum width = 2cm, minimum height = 2cm] (r1) at (11,0) {T};
\node (e1) at (12.3, .4){};
\node (f1) at (12.3, -.4){};
\node (g1) at (14, .3){};
\node (h1) at (14, -.3){};

\draw[-] (b1) to [out=30, in=150] (11);
\draw[-] (b1) to [out=-30, in=210] (11);
\draw[-] (b1) to [out=210, in=-20] (r1);
\draw[-] (b1) to [out=150, in=20] (r1);
\draw[dotted, thick] (e1) -- (f1);
\draw[dotted, thick] (g1) -- (h1);

\end{tikzpicture}
\caption{Graphs giving handlebody diagrams of $\Sigma(B)$ and $\Sigma(B')$}\label{fig:taitc}
\end{subfigure}

\label{fig:linking}
\caption{}
\end{figure}

\begin{corollary}
Let $L$ be a connected sum of negative torus links $T(2,k_1),\ldots,T(2,k_n)$. Then $\Sigma(L)$ bounds a rational ball if and only if $k_i\in\{-1,-2, -4\}$ for all $i$ and there is an even number of parameters equal to $-2$.
\label{cor:torussum}
\end{corollary}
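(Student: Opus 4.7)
The plan is to reduce Corollary \ref{cor:torussum} to Theorem \ref{thm:main} via Theorem \ref{thm:maintop}, after first making $\Lambda(L)$ explicit.

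First I would identify $\Lambda(L)$. Applying Proposition \ref{prop:diffeo} inductively to fix a convenient checkerboard surface, the Tait graph of $L$ can be taken to be a path with $n+1$ vertices $v_0, v_1, \ldots, v_n$ in which $v_{i-1}$ and $v_i$ are joined by $|k_i|$ parallel edges. Deleting $v_0$ yields a tridiagonal reduced Goeritz matrix $G'$ on the basis $\{v_1, \ldots, v_n\}$. The upper-triangular substitution $w_i := v_i + v_{i+1} + \cdots + v_n$ diagonalizes $G'$: a short induction shows $\langle w_i, w_j\rangle = |k_i|\, \delta_{ij}$. Consequently $\Lambda(L)$ is isometric to the orthogonal lattice with diagonal Gram matrix having entries $|k_1|, \ldots, |k_n|$.

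Second, by Theorem \ref{thm:maintop}, $\Sigma(L)$ bounds a rational homology 4-ball if and only if $\Lambda(L)$ admits a cubiquitous lattice embedding into $\Z^n$. Since $\Lambda(L)$ is orthogonal, Theorem \ref{thm:main} says such an embedding exists if and only if, after a change of basis of $\Z^n$, one can write a basis matrix for $\Lambda(L)$ as a block matrix whose blocks are $[1]$, $[2]$, or $\begin{bmatrix}1 & -1 \\ 1 & 1\end{bmatrix}$.

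Third, I would match column norms. The single column of a $[1]$ block has squared norm $1$, that of a $[2]$ block has squared norm $4$, and the two columns of $\begin{bmatrix}1 & -1 \\ 1 & 1\end{bmatrix}$ each have squared norm $2$. Any basis matrix realising $\Lambda(L)$ in $\Z^n$ has columns whose squared norms form the multiset $\{|k_1|, \ldots, |k_n|\}$, so the block decomposition exists if and only if this multiset partitions into singletons of $1$, singletons of $4$, and pairs of $2$'s. This is equivalent to $|k_i| \in \{1, 2, 4\}$ for every $i$ together with an even number of $i$'s satisfying $|k_i|=2$. Since each $k_i$ is negative, this is precisely the statement $k_i \in \{-1, -2, -4\}$ with an even count of $k_i = -2$.

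The main obstacle is the first step: $\Lambda(L)$ is not manifestly orthogonal in its natural Tait-graph basis, so one must exhibit the explicit change of basis above (and verify orthogonality by induction) before Theorem \ref{thm:main} becomes applicable. Once orthogonality is in hand, the remainder is bookkeeping.
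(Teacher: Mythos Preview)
Your argument has a circularity problem: you invoke Theorem~\ref{thm:maintop} to obtain the biconditional ``$\Sigma(L)$ bounds a rational ball $\Longleftrightarrow$ $\Lambda(L)$ is cubiquitous,'' but in the paper Theorem~\ref{thm:maintop} is \emph{deduced from} Corollary~\ref{cor:torussum}. Concretely, the non-trivial direction of Theorem~\ref{thm:maintop} is ``$\Lambda(L)$ cubiquitous $\Rightarrow$ $\Sigma(L)$ bounds a rational ball,'' and that is exactly the implication you are trying to establish here. So your Step~2 assumes the conclusion. The paper avoids this by arguing the two directions separately: for ``bounds $\Rightarrow$ parameters constrained'' it uses Theorem~\ref{thm:greeneowens} together with Theorem~\ref{thm:main}, and for ``parameters constrained $\Rightarrow$ bounds'' it appeals to an explicit construction (the link bounds a disk plus a M\"obius band in $B^4$, so its branched double cover bounds a rational ball), citing \cite{lisca-sumsoflensspaces}. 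Your proposal supplies no substitute for this second, constructive half.

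A secondary remark: your Step~1 is correct but works harder than necessary. You pick the checkerboard whose Tait graph is a multi-edge path, obtain a tridiagonal Goeritz matrix, and diagonalise via $w_i=v_i+\cdots+v_n$. The paper instead uses the other checkerboard (the one containing the unbounded region for the diagram of Figure~\ref{fig:orthoa}); deleting the unbounded vertex already leaves isolated vertices of weights $-k_i$, so $\Lambda(L)\cong\operatorname{diag}(-k_1,\ldots,-k_n)$ with no change of basis needed. Either route yields the same orthogonal lattice, but the paper's is a one-line observation rather than an inductive computation.
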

\begin{proof}
    By Proposition \ref{prop:diffeo}, the lattice embedding associated to $L$ does not depend on the way one performs the connected sum. Consider the specific connected sum diagram depicted in Figure \ref{fig:orthoa}. Denote by $W$ the unbounded checkerboard surface of $L$. Note that if we delete the vertex associated to the unbounded region in the orresponding Tait graph $\Gamma_W$, the resulting diagram consists of isolated vertices with weight $k_i$ for $i=1,\cdots, n.$ It follows that $\Lambda(L)$ has matrix representation $\operatorname{diag}\{k_1,\cdots,k_n\}$.

    If $\Sigma(L)$ bounds a rational homology 4-ball, then by Theorem \ref{thm:main} and Theorem \ref{thm:greeneowens} we have that $k_i\in\{-1,-2,-4\}$ for all $i$ and there is an even number of parameters equal to $-2$. On the other hand, to show that if the parameters are as stated, then $\Sigma(L)$ bounds a rational homology 4-ball, one can show that the the link shown in Figure \ref{fig:orthoa} bounds a smooth, properly embedded surface in $B^4$ made up of a disk and a Mobius band; the double branched cover of such a link necessarily bounds a rational homology 4-ball. See \cite{lisca-sumsoflensspaces} for details. 
\end{proof}

\begin{proof}[Proof of Theorem \ref{thm:maintop}]
 The $``\Longrightarrow"$ direction follows from Theorem \ref{thm:greeneowens}. For the $``\Longleftarrow"$ direction,  let $L$ be an alternating connected sum of $n$ negative torus links. By the discussion above, these torus links are of the form $T(2,k_1),\ldots,T(2,k_n)$ for integers $k_i \geq 1$. 
 The result now follows from Corollary \ref{cor:torussum}. 
 \end{proof}

\section{A note on cubiquity and contractions}\label{sec:contractions}

The notion of \textit{contractions} has been used in the literature (\cite{liscalensspaces}, \cite{lisca-sumsoflensspaces}, \cite{simone}) to help understand the behavior of particular subsets, which often belong to infinite families. We will see in this section, that contractions preserve the Wu obstruction (Proposition \ref{prop:Wuobstruction}). This, in turn, allows us to obstruct cubiquity for a large class of subsets.

\begin{definition} Suppose $S=\{v_1,\ldots,v_n\}\subset\Z^n$, $n\ge 3$, such that there exist integers $i,s$, $t$, and $u$ such that $E_i=\{s,t,u\}$, where  $\langle v_{s}, v_{t}\rangle=-1$, $\langle v_s, e_i\rangle=-\langle v_t, e_i\rangle=\pm1$, $|\langle v_u, e_i\rangle|=1$, and $a_u\ge 3$. Then the subset $S'\subset \mathbb{Z}^{n-1}=\langle e_1,\ldots,e_{i-1},e_{i+1},\ldots, e_n\rangle$ defined by
\begin{center}	
$S'=(S\setminus\{v_s,v_{t},v_u\})\cup\{v_s+v_{t},v_u-\langle v_u, e_i\rangle e_i\},$
\end{center}
is called a \textit{contraction} of $S$. 
\label{def:contraction}
\end{definition}

\contractions*

\begin{proof} Let $S$ be a subset with Wu element $W=\sum_{\alpha=1}^nk_\alpha e_\alpha$.
Let $i,s,t$, $u$, and $S'$ be as in Definition \ref{def:contraction}. Let $W'=\sum_{\alpha=1, \alpha\neq i}^nk_\alpha'e_\alpha$ denote the Wu element of $S'$. It is easy to see that $k_i'=k_i$ for all $i\neq \alpha$ and hence $W=W'\pm e_i$. It follows that $\sum_{\alpha=1}^nk_\alpha>4n-3|R_o^S|$ if and only if $\sum_{\alpha=1, \alpha\neq i}^nk'_\alpha>4n-3|R_o^{S'}|.$ In particular, the Wu obstruction given by Proposition \ref{prop:Wuobstruction} is preserved by contractions.
\end{proof}

We will now restrict our attention to good subsets as defined by Lisca in \cite{liscalensspaces}. It is worth mentioning that Lisca defines contractions differently than the definition presented here; however these definitions coincide for the subsets considered.

\begin{proposition} If $S$ is a good subset with $p_1(S) \neq 0$ then $S$ is cubiquitous if and only if $I(S)\le 0$.  
\label{prop:goodcub}
\end{proposition}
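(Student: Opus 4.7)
The plan is to combine the contraction machinery (Lemma \ref{lem:preserve}) with the Wu obstruction (Proposition \ref{prop:Wuobstruction}) to establish both directions. First I would observe that $I(S)$ is itself invariant under contractions: replacing $(v_s, v_t, v_u)$ by $(v_s + v_t,\, v_u - \langle v_u, e_i\rangle e_i)$ changes $\sum a_\alpha$ by $(a_s + a_t - 2) + (a_u - 1) - a_s - a_t - a_u = -3$, which exactly matches the drop $n \to n-1$ in $I(S) = \sum(a_\alpha - 3)$. Moreover, the hypothesis $p_1(S) \neq 0$ persists after contraction, since the vector $v_s + v_t$ inherits any unique $j$-coordinate that $v_s$ or $v_t$ alone possessed (and similarly for $v_u - \langle v_u, e_i\rangle e_i$). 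Thus both $I(S)$ and the condition $p_1 \neq 0$ are invariants of the contraction orbit of $S$, and it suffices to argue on a contraction-terminal representative.

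For the forward direction (cubiquitous $\Longrightarrow I(S) \leq 0$), I would argue contrapositively. Mediocrity gives $\sum k_i^2 = \langle W, W\rangle = \sum a_i - 2m$, where $m$ counts the edges with $\langle v_i, v_{i+1}\rangle = -1$, so the Wu inequality of Proposition \ref{prop:Wuobstruction} reduces to $I(S) > n - 3|R_o| + 2m$. On a contraction-terminal representative with $p_1 \neq 0$, the rigidity of the mediocre structure makes this inequality tractable: any coordinate $i$ with $E_i = \{s\}$ contributes a term $k_i^2 \geq 1$ to $\sum k_i^2$, either placing $i \in R_o$ (when $k_i$ is odd) or boosting $\sum k_i^2$ by $\geq 4$ (when $|k_i| \geq 2$). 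A bookkeeping argument analogous to the final inequality chain in the proof of Proposition \ref{prop:p2} would then force the Wu inequality whenever $I(S) > 0$.

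For the reverse direction ($I(S) \leq 0 \Longrightarrow$ cubiquitous), I would exploit the bound $\sum a_i \leq 3n$ with $a_i \geq 2$ to show that the contraction-terminal form of $S$ splits as a disjoint union of the ``elementary'' blocks allowed by Theorem \ref{thm:main}. Specifically, the $p_1 \neq 0$ hypothesis lets me pick $E_i = \{s\}$ with $v_s = k e_i + f$; in the $I(S) \leq 0$ regime one is forced to have $|k| \leq 2$, so after a change of basis absorbing $f$ into the complementary coordinates, the $e_i$-direction contributes a cubiquitous rank-one factor and the residual subset (in the remaining $n-1$ coordinates) inherits the same hypotheses. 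An inductive application of Lemma \ref{lem:block} would then decompose $\Lambda$ as a direct sum of cubiquitous sublattices, and cubiquity of $\Lambda$ follows.

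The main obstacle is the reverse direction: executing the direct-sum splitting when $f \neq 0$ requires a careful change of basis to avoid introducing spurious cross terms, and checking that the residual subset remains good with $p_1 \neq 0$ demands an inductive analysis of how the mediocre edges interact with the isolated coordinate being peeled off. The forward direction, by contrast, is essentially mechanical once one has reduced to a contraction-terminal form and applied the translation between $I(S)$ and $\sum k_i^2$.
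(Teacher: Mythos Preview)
Your forward direction is roughly aligned with the paper's in spirit (reduce via contractions, then invoke the Wu obstruction), but the execution diverges in an important way. The paper does not attempt a general ``bookkeeping argument'' on an arbitrary contraction-terminal representative. Instead it cites an external result---Lemma 3.2 of \cite{liscalensspaces}---which says that a good subset with $p_1>0$ can always be contracted down to length $3$. At length $3$ the classification is a one-line check: the only possibilities are $S'=\{e_1+e_2,\,-e_2+xe_3,\,e_2-e_1\}$ with $W=e_2+xe_3$, and one verifies directly that the Wu inequality of Proposition~\ref{prop:Wuobstruction} holds for $S'$ exactly when $|x|\ge 3$, equivalently $I(S')>0$. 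Lemma~\ref{lem:preserve} (together with your own observation that $I$ is contraction-invariant) then transports the obstruction back to $S$. Without Lisca's reduction lemma, your ``contraction-terminal'' form could sit at arbitrary length, and the rewriting $I(S)>n+2m-3|R_o|$ you would need does not follow from $I(S)>0$ by any bookkeeping analogous to Proposition~\ref{prop:p2}; that is the real content being imported.

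Your reverse direction has a genuine gap. Good subsets are \emph{irreducible} by definition (see the remark after the definition of non-acute/mediocre), so they admit no nontrivial direct-sum splitting of the kind you propose. Concretely, if $E_i=\{s\}$ and $v_s=ke_i+f$ with $f\neq 0$ (which is forced, again by irreducibility), then $ke_i\notin\Lambda$, and no change of orthonormal basis of $\mathbb{Z}^n$---the only basis changes that respect cubiquity---can ``absorb $f$'' to make it zero. So the inductive peeling via Lemma~\ref{lem:block} cannot start. (Your auxiliary claim that $|k|\le 2$ is forced by $I(S)\le 0$ is also not correct in general: $I(S)\le 0$ only gives $a_s\le n+2$.) The paper handles this direction by an entirely different route: once $|x|\le 2$, it cites Lisca's classification \cite{liscalensspaces} to identify $S$ with the linear lattice of a lens space that bounds a rational homology $4$-ball, and then invokes Theorem~\ref{thm:greeneowens} to conclude cubiquity. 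In other words, the ``$I(S)\le 0\Rightarrow$ cubiquitous'' direction here is not proved by internal lattice combinatorics at all, but by going through the topology.
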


\begin{proof}
By Lemma 3.2 in \cite{liscalensspaces}, any good subset with $p_1(S)>0$ can be contracted to a length 3 subset with $p_1(S)>0$. It can be checked by hand that the only such subsets are of the form 
$$S'=\{e_1+e_2, -e_2+xe_3, e_2-e_1\},$$
where $x\neq 0$. The Wu element of this subset is $W=xe_3+e_2$. If $I(S)\le 0$, then $|x|\ge3$; by Proposition \ref{prop:Wuobstruction}, $S'$ is not cubiquitous. Hence $S$ is not cubiquitous. If $I(S)>0$, then we have either $|x|=1$ or $|x|=2$. It is known (see \cite{liscalensspaces}) that in both cases, $S$ corresponds to a lens space that bounds a rational homology 4-ball. It follows that $S$ is cubiquitous.
\end{proof}

\begin{remark}
The proof strategy of Proposition \ref{prop:goodcub} can be applied to good subsets with $p_1(S)=0$, but a general argument is not as clear. In the $p_1(S)\neq 0$ case, good subsets can be contracted to length 3 subsets; this is not the case when $p_1(S)= 0$. In fact, it is unknown in general how small the length of a subset with $p_1=0$ can be made via contractions.
\end{remark}

\printbibliography
\end{document}